\definecolor{lightgray}{rgb}{0.6, 0.6, 0.6}
\definecolor{darkgray}{rgb}{0.7, 0.7, 0.7}
\definecolor{darkblue}{rgb}{0, 0, .4}
\newcommand{\false}{\texttt{F}}
\newcommand{\true}{\texttt{T}}
\theoremstyle{plain}
\newtheorem{theorem}{Theorem}[section]
\newtheorem{proposition}[theorem]{Proposition}
\newtheorem{conjecture}[theorem]{Conjecture}
\theoremstyle{definition}
\newfont{\footsc}{cmcsc10 at 8truept}
\newfont{\footbf}{cmbx10 at 8truept}
\newfont{\footrm}{cmr10 at 10truept}
\renewenvironment{abstract}%
                {
                  \begin{list}{}%
                     {\setlength{\rightmargin}{1in}%
                      \setlength{\leftmargin}{1in}}%
                   \item[]\ignorespaces\begin{small}}%
                 {\end{small}\unskip\end{list}}
\newcommand{\st}{\::\:}
\newcommand{\Av}{\operatorname{Av}}
\newcommand{\C}{\mathcal{C}}
\newcommand{\D}{\mathcal{D}}
\newcommand{\F}{\mathcal{F}}
\newcommand{\V}{\mathcal{V}}
\newcommand{\gr}{\mathrm{gr}}
\newcommand{\OEISlink}[1]{\href{http://oeis.org/#1}{#1}}
\newcommand{\OEISref}{\href{http://oeis.org/}{OEIS}~\cite{sloane:the-on-line-enc:}}
\newcommand{\OEIS}[1]{sequence \OEISlink{#1} in the \OEISref}
\newcommand{\Grid}{\operatorname{Grid}}
\newcommand{\p}[1]{#1^+}
\newcommand{\m}[1]{#1^-}
\renewcommand{\d}[1]{#1^{\bullet}}
\title{\sc Generating Permutations With Restricted Containers}
\author{
	\begin{tabular}{cc}
        Michael Albert&Cheyne Homberger\\
		{\small Department of Computer Science}&{\small Department of Mathematics \& Statistics}\\[-3pt]
		{\small University of Otago}&{\small University of Maryland, Baltimore County}\\[-3pt]
		{\small Dunedin, New Zealand}&{\small Baltimore, Maryland}\\[20pt]
        Jay Pantone\footnotemark&Nathaniel Shar\\
		{\small Department of Mathematics}&{\small Department of Mathematics}\\[-3pt]
		{\small Dartmouth College}&{\small Rutgers University}\\[-3pt]
		{\small Hanover, New Hampshire}&{\small New Brunswick, New Jersey}\\[20pt]
		\multicolumn{2}{c}{Vincent Vatter\footnotemark[1]}\\
		\multicolumn{2}{c}{{\small Department of Mathematics}}\\[-3pt]
		\multicolumn{2}{c}{{\small University of Florida}}\\[-3pt]
		\multicolumn{2}{c}{\small Gainesville, Florida}
	\end{tabular}
}
\date{}
\begin{document}

\maketitle

\setcounter{footnote}{1}
\footnotetext{Pantone and Vatter were partially supported by the National Science Foundation under Grant Number DMS-1301692.}

\begin{abstract}
We investigate a generalization of stacks that we call $\C$-machines. We show how this viewpoint rapidly leads to functional equations for the classes of permutations that $\C$-machines generate, and how these systems of functional equations can be iterated and sometimes solved. 
General results about the rationality, algebraicity, and the existence of Wilfian formulas for some classes generated by $\C$-machines are given.
We also draw attention to some relatively small permutation classes which, although we can generate thousands of terms of their counting sequences, seem to not have D-finite generating functions.
\end{abstract}

\pagestyle{main}

\section{Introduction}

\subsection{History and context}

The study of permutation patterns is generally considered to have been started by Knuth, when he proved in the first volume of \emph{The Art of Computer Programming}~\cite[Section 2.2.1]{knuth:the-art-of-comp:1} that a permutation can be generated by a stack if and only if it avoids $312$ (i.e., does not contain three entries in the same relative order as $312$). That initial work was followed by a series of papers dealing with permutations sorted or generated by machines of varying kinds. Prompted apparently by an observation of Pratt~\cite{pratt:computing-permu:}, this led to the general consideration of \emph{permutation classes}, which has become a highly active area of combinatorial research in its own right. We refer to the last author's surery~\cite{vatter:permutation-cla:} for a broad overview of this area.

Many works in this area have focused on enumerative questions about permutation classes, as we do here. We would like to emphasize however that the ability to effectively enumerate a class is really a proxy indicating that we understand its structure at some sufficient level of detail --- one of the main goals of research in the area is to refine our understanding of such structure.

In this work we look back to the structure of permutations generated by machines, where a machine is simply a container subject to certain restrictions (analogous to the stack in Knuth's original work). We show that this notion captures the equivalence between various permutation classes, specifically most of those enumerated by the Catalan or Schr\"oder numbers. We are able to provide rigorous formulas that enumerate some other permutation classes of this type. Finally, because of the simplicity of the underlying mechanism we are able to compute a large number of initial terms in the generating functions of some other, seemingly equally uncomplicated, permutation classes and establish by empirical methods that their generating functions appear not to be D-finite.

\subsection{Concepts and definitions}

We are solely concerned with classical permutation patterns, in which the permutation $\pi$ \emph{contains} the permutation $\sigma$ if $\pi$ contains a subsequence \emph{order isomorphic} (i.e., with the same pairwise comparisons) to $\sigma$. Otherwise, $\pi$ \emph{avoids} $\sigma$. For example, $53412$ contains $321$, as evidenced by any of the subsequences $531$, $532$, $541$, or $542$. The containment relation is a partial order, and a \emph{permutation class} is a downset, or lower order ideal, of permutations under this order. 

As with any downset in a poset, every permutation class can be described as
\[
	\Av(B)=\{\pi\st\pi\mbox{ avoids all $\beta\in B$}\}
\]
for some set $B$ of permutations. We may take the set $B$ to be an \emph{antichain}, i.e., a set of pairwise incomparable permutations, and if $B$ is an antichain its choice is unique, and we refer to it as the \emph{basis} of the class in question. Given a permutation class $\C$ and nonnegative integer $n$, we denote by $\C_n$ the set of permutations in $\C$ of length $n$, and refer to
\[
	\sum_{n\ge 1} |\C_n|x^n=\sum_{\pi\in\C} x^{|\pi|}
\]
as its generating function. Two classes are \emph{Wilf-equivalent} if they have the same generating function. The \emph{growth rate} of the permutation class $\C$ is defined as
\[
	\gr(\C)=\lim_{n\rightarrow\infty} \sqrt[n]{|\C_n|}
\]
when this limit exists. 

Some permutation classes are trivially Wilf-equivalent via the \emph{symmetries} of the permutation order. Given a permutation $\pi = \pi(1)\pi(2)\cdots\pi(n)$, the reverse of $\pi$ is the permutation $\pi^{\textrm r}$ defined by $\pi^{\textrm r}(i) = \pi(n+1-i)$, the complement of $\pi$ is the permutation $\pi^{\textrm c}$ defined by $\pi^{\textrm c}(i) = n + 1 - \pi(i)$, and the (group-theoretic) inverse of $\pi$ is the permutation $\pi^{-1}$ defined by $\pi^{-1}(\pi(i)) = \pi(\pi^{-1}(i)) = i$. From the geometric viewpoint, reversing a permutation consists of reflecting its plot over any vertical line, complementing it consists of reflecting its plot over any horizontal line, and inverting it consists of reflecting its plot over a line of slope $1$.

We need to define the two operations on permutations illustrated in Figure~\ref{fig-sums}. Given permutations $\pi$ of length $k$ and $\sigma$ of length $\ell$, their \emph{(direct) sum} is defined as
\[
    (\pi\oplus\sigma)(i)
    =
    \left\{\begin{array}{ll}
    \pi(i)&\mbox{for $1\le i\le k$},\\
    \sigma(i-k)+k&\mbox{for $k+1\le i\le k+\ell$}.
    \end{array}\right.
\]
The analogous operation depicted on the right of Figure~\ref{fig-sums} is called the \emph{skew sum}. We can now characterize the class of permutations which can be generated by a $\C$-machine.

\begin{figure}
\begin{center}
	$\pi\oplus\sigma=$
	\begin{tikzpicture}[scale=0.5, baseline=(current bounding box.center)]
		\draw (0,0) rectangle (1,1);
		\draw (1,1) rectangle (2,2);
		\node at (0.5,0.5) {$\pi$};
		\node at (1.5,1.5) {$\sigma$};
	\end{tikzpicture}
\quad\quad\quad\quad
	$\pi\ominus\sigma=$
	\begin{tikzpicture}[scale=0.5, baseline=(current bounding box.center)]
		\draw (0,1) rectangle (1,2);
		\draw (1,0) rectangle (2,1);
		\node at (0.5,1.5) {$\pi$};
		\node at (1.5,0.5) {$\sigma$};
	\end{tikzpicture}
\end{center}
\caption{The sum and skew sum operations}
\label{fig-sums}
\end{figure}

We are concerned here with a fairly general family of machines. Suppose that $\C$ is a permutation class.
A $\C$-machine is a machine consisting of a container that holds partial permutations. In using this $\C$-machine to generate permutations from the input $12\cdots n$ we may at any time perform one of three operations:
\begin{itemize}
\item remove the next entry from the input and immediately append it to the end of the output (a \emph{bypass}),
\item remove the next entry from the input and place it anywhere in the container in such a way that the partial permutation in the container is in the same relative order as a permutation in the class $\C$ (a \emph{push}), or
\item remove the leftmost entry from the container and append it to the end of the output (a \emph{pop}).
\end{itemize}
The operation of this machine could be analogized to the situation of an administrator who, upon receiving a new task, may choose either to perform it immediately (the bypass option) or file it away. The administrator may also, of course, choose to perform some of the filed tasks, but only in the order in which they lie in the filing cabinet, and the possible orderings of the tasks within the filing cabinet is restricted.

We refer to a sequence of operations of this form as a \emph{generation sequence} for the permutation $\pi$ that is eventually produced. Formally, a generation sequence corresponds to a sequence of letters specifying which of these three actions was taken and in the case of a push operation, where the new element was pushed.

For a simple example, consider the $\Av(12)$-machine, illustrated on the left of Figure~\ref{fig-C-machine-example}. In this machine the container may only contain entries in decreasing order. Thus in generating permutations with the $\Av(12)$-machine, if we push an entry from the input to the container we must place it at the leftmost end of the container (because at any point in time all entries in the input are necessarily greater than every entry in the container). We may also pop from the beginning of the container. In this machine (but not in general) a bypass is equivalent to a push followed immediately by a pop, and therefore we may ignore bypasses. Thus the $\Av(12)$-machine is equivalent to a stack.

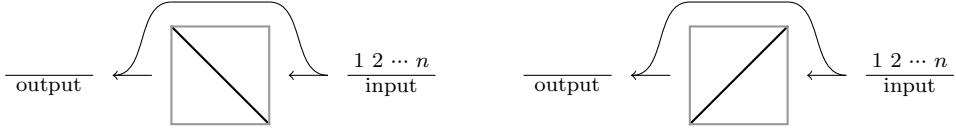
\begin{figure}
\begin{footnotesize}
\begin{center}
\begin{tabular}{ccc}
	\begin{tikzpicture}[scale=1.3, baseline=(current bounding box.center)]
		\draw [thick, line cap=round] (0,1)--(1,0);
		\foreach \i in {0,1}{
			\draw [lightgray, thick, line cap=round] (0,\i)--(1,\i);
		}
		\foreach \i in {0,1}{
			\draw [lightgray, thick, line cap=round] (\i,0)--(\i,1);
		}
		
		%
		\draw (1.8,0.5)--(2.7,0.5) node [above=2pt, below, midway] {input} node [above, midway] {$1\ 2\ \cdots\ n$};
		\draw (-1.7,0.5)--(-0.8,0.5) node [above=2pt, below, midway] {output};
		\draw[->] (1.6,0.5)--(1.2,0.5); 
		\draw (-0.2,0.5)--(-0.6,0.5);   
		\draw (1,1.25) to [out=0, in=180] (1.6, 0.5);
		\draw (1,1.25)--(0,1.25);
		\draw[->] (0,1.25) to [out=180, in=0] (-0.6, 0.5);
	\end{tikzpicture}
&\quad\quad&
	\begin{tikzpicture}[scale=1.3, baseline=(current bounding box.center)]
		\draw [thick, line cap=round] (0,0)--(1,1);
		\foreach \i in {0,1}{
			\draw [lightgray, thick, line cap=round] (0,\i)--(1,\i);
		}
		\foreach \i in {0,1}{
			\draw [lightgray, thick, line cap=round] (\i,0)--(\i,1);
		}
		
		%
		\draw (1.8,0.5)--(2.7,0.5) node [above=2pt, below, midway] {input} node [above, midway] {$1\ 2\ \cdots\ n$};
		\draw (-1.7,0.5)--(-0.8,0.5) node [above=2pt, below, midway] {output};
		\draw[->] (1.6,0.5)--(1.2,0.5); 
		\draw (-0.2,0.5)--(-0.6,0.5);   
		\draw (1,1.25) to [out=0, in=180] (1.6, 0.5);
		\draw (1,1.25)--(0,1.25);
		\draw[->] (0,1.25) to [out=180, in=0] (-0.6, 0.5);
	\end{tikzpicture}
\end{tabular}
\end{center}
\end{footnotesize}
\caption{The $\Av(12)$-machine, which generates $\Av(312)$, and the $\Av(21)$-machine, which generates $\Av(321)$. The internal lines in diagrams of this type represent the allowed positions of the elements stored in the machine, so in the first case any entries in the machine must be decreasing when read left to right, and in the second case they must be increasing.}
\label{fig-C-machine-example}
\end{figure}

Before beginning the general study of $\C$-machines, we consider one more specific example, the $\Av(21)$-machine, illustrated on the right of Figure~\ref{fig-C-machine-example}. In this machine we may only push into the rightmost end of the container, and since pops only occur from the far left of the machine, the bypass operation is necessary.  We claim that this machine generates the class $\Av(321)$. It is evidently impossible for the machine to generate $321$, as the $3$ would have to be the result of a bypass while the $21$ pattern lies in the container, which is not possible. In the other direction, we know that permutations in $\Av(321)$ consist of two increasing subsequences: their left-to-right maxima (the entries $\pi(j)$ satisfying $\pi(j)>\pi(i)$ for all $i<j$) and their non-left-to-right maxima. Upon reading the next symbol from the input, if it is to be a left-to-right maximum we first pop all entries in the container that come before it and then perform a bypass to put it in the correct position in the output, while if the next symbol from the input is not a left-to-right maximum we can simply push it into the container. When the input is empty, we finish by \emph{flushing} (popping all the entries of) the container.

It is well-known that $\Av(312)$ and $\Av(321)$ are both counted by the Catalan numbers, so the $\Av(21)$- and $\Av(12)$-machines generate equinumerous permutation classes. This is no accident. Indeed, Section~\ref{sec-catalan} shows how the description of $\Av(312)$ and $\Av(321)$ via $\C$-machines implicitly defines a bijection between these two classes which preserves the location and value of left-to-right maxima. This was observed in a similar context by Doyle~\cite{Doyle:Stackable-and-q:}.

\subsection{The main property}

It follows directly from the definitions that if $\C$ is a permutation class then so is the collection of permutations output from the $\C$-machine. The following theorem shows that there is a close connection between the bases of these two classes.

\begin{theorem}
\label{thm-basis}
For any set $B$ of permutations, the $\Av(B)$-machine generates the class
\[
	\Av(1\ominus B)=\Av(\{1\ominus\beta\st\beta\in B\}).
\]
\end{theorem}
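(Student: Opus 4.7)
The plan is to prove both inclusions of the equality $\D = \Av(1 \ominus B)$, where $\D$ denotes the class generated by the $\Av(B)$-machine.

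For the inclusion $\D \subseteq \Av(1 \ominus B)$, I argue by contrapositive. Suppose $\pi \in \D$ contains a $1 \ominus \beta$ pattern with $\beta \in B$, witnessed by an apex entry $\pi(\ell)$ together with positions $\ell < j_1 < \cdots < j_k$ for which $\pi(j_1), \ldots, \pi(j_k)$ is a $\beta$-pattern with all values below $\pi(\ell)$. The key step is to inspect the container in the instant just before $\pi(\ell)$ is output (whether by bypass or pop). Since the input stream is $12\cdots n$, every value smaller than $\pi(\ell)$ whose output lies after position $\ell$ must already sit in the container at this moment; in particular, each $\pi(j_t)$ does. Because the container is popped from the left in the eventual output order, these $k$ entries are arranged left-to-right in the container in the order $j_1 < \cdots < j_k$ with the same relative values as in $\pi$. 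Hence a $\beta$-pattern lives in the container, contradicting the $\Av(B)$ restriction on container contents.

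For the reverse inclusion $\Av(1 \ominus B) \subseteq \D$, given $\pi \in \Av(1 \ominus B)$ I exhibit a concrete generation sequence: bypass each left-to-right maximum of $\pi$, push every other entry (inserting it into the container at the slot dictated by its $\pi$-position, so that the left-to-right container order always agrees with the $\pi$-position order of its current contents), and pop from the left as soon as the next required output entry is in the container. Verifying that this procedure outputs exactly $\pi$ is straightforward; the substantive task, and the step I expect to require the most care, is to check that after every operation the container's pattern still lies in $\Av(B)$.

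To carry out that check, I maintain the invariant that at any state with $j$ outputs produced and input pointer $v$, the container equals $T_{j,v} = \{i > j : \pi(i) \leq v\}$, whose pattern is $\pi|_{T_{j,v}}$. A short analysis of reachable states shows that $v$ either equals $M_j := \max(\pi(1), \ldots, \pi(j))$ (a state between operations or after a pop) or satisfies $M_j < v < M_{j+1} = \pi(j+1)$ (mid-processing inputs toward the next left-to-right maximum $\pi(j+1)$). Now suppose $T_{j,v}$ contains a $\beta$-pattern at positions $i_1 < \cdots < i_k$, with $M = \max_t \pi(i_t) \leq v$. In the first regime, $M_j$ is realised at $\ell = \pi^{-1}(M_j) \leq j < i_1$, and $M_j > M$ strictly by distinctness of the values of $\pi$; in the second regime, $j+1 \notin T_{j,v}$ forces $i_1 \geq j+2$, and then $\ell = j+1$ satisfies $\pi(\ell) = \pi(j+1) > v \geq M$. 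Either way, $\pi$ would contain $1 \ominus \beta$, contradicting the hypothesis.
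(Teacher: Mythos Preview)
Your proof is correct and follows essentially the same approach as the paper: both directions proceed via the left-to-right maxima, with the forward direction inspecting the container at the moment the apex entry is processed and the reverse direction exhibiting the ``bypass the LTR maxima, push everything else in $\pi$-position order'' generation sequence. Your version is more thorough than the paper's---you explicitly maintain the invariant $T_{j,v}$ and verify the $\Av(B)$ condition at \emph{every} intermediate state (including mid-push states in regime~2), whereas the paper only checks the container at the checkpoint just before each bypass and leaves the intermediate states implicit.
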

\begin{proof}
The $\Av(B)$-machine cannot generate any permutation of the form $1\ominus\beta$ for $\beta\in B$; to do so, the container would have to contain a copy of $\beta$ at the point when the first entry of $1\ominus\beta$ was next in the input.

For the converse, suppose that $\pi$ avoids $1\ominus\beta$ for all $\beta\in B$. Label the positions of the left-to-right maxima of $\pi$ as $1=i_1<i_2<\cdots<i_k$. At the moment that $\pi(i_j)$ is the next symbol of the input, all entries which lie before it in $\pi$ are smaller than it (because it is a left-to-right maximum) so we may suppose that these entries have already exited or bypassed the container. Thus at this moment, the entries of $\pi$ which lie to the right and are smaller than $\pi(i_j)$ should be in the container. This is possible because these entries avoid all of the permutations in $B$ (because $\pi$ avoids $1\ominus B$). Thus upon reaching this point, we may bypass the container to place $\pi(i_j)$ directly in the output. We may then output all entries of the container which lie to the left of $\pi(i_{j+1})$ in $\pi$, and proceed as before. At the end of the process, we flush the container to complete the generation of $\pi$.
\end{proof}

The simple characterization provided by Theorem~\ref{thm-basis} allows us to tell immediately if a class is generated by a $\C$-machine: a class is generated by a $\C$-machine if and only if all of its basis elements begin with their largest entries. It also allows us to tell \emph{what} $\C$-machine generates a given class. In particular, let us consider a question raised by Mikl\'os B\'ona at the conference \emph{Permutation Patterns 2007}~\cite[Question 4]{vatter:problems-and-co:}. Atkinson, Murphy, and Ru\v{s}kuc~\cite{atkinson:sorting-with-tw:} showed that the permutation class sortable by two ``ordered'' stacks in series, despite having the infinite basis
\[
	\{2\ (2k-1)\ 4\ 1\ 6\ 3\ 8\ 5\ \cdots\ (2k)\ (2k-3) \st k\ge 2\},
\]
is equinumerous to the class $\Av(1342)$, first counted by B\'ona~\cite{bona:exact-enumerati:} (a simpler proof of this Wilf-equivalence result has since been given by Bloom and Vatter~\cite{bloom:two-vignettes-o:}). B\'ona asked
\begin{quote}
	``Is there a natural sorting machine / algorithm which can sort precisely the class $\Av(1342)$?''
\end{quote}
The answer (up to symmetry) is yes: the symmetric class $\Av(4213)$ is generated by the $\Av(213)$-machine. 

From the form of the basis for the class produced by a $\C$-machine (or from a simple consideration of its operation) all such classes are closed under the sum operation. It then follows immediately from Fekete's Lemma that all such classes have a proper growth rate (see Arratia~\cite{arratia:on-the-stanley-:}).

\subsection{Operation of the machine}
\label{sec-unique}

For any class $\C$, the $\C$-machine seemingly has three operations at its disposal: bypass, push, and pop. For enumerative purposes we must establish a \emph{unique generation sequence} for every permutation that can be generated. To this end, we adopt conventions that handle two situations where non-uniqueness could arise:
\begin{enumerate}
\item[(U1)] we should pop from the container whenever possible, and
\item[(U2)] we should bypass the container whenever possible.
\end{enumerate}
Another way to phrase these two rules is that in trying to generate a particular permutation $\pi$ from the input sequence $12 \cdots n$ if at some point the next symbol of $\pi$ is the first symbol in the container then it should be output immediately (by U1), while if it is the next symbol on the input then it should be produced by an immediate bypass (by U2).
The rules (U1) and (U2) correspond to choosing the ``leftmost'' possible action at all times. Another valuable observation is that (U1) and (U2) together imply that in any generation sequence, no pop will immediately follow a push, because otherwise the pop should have either been a bypass or occurred earlier. In our resulting functional equations, this issue will frequently arise as a flag which indicates whether pops are permitted in the corresponding state. Our next result verifies that (U1) and (U2) indeed guarantee uniqueness.

\begin{proposition}
\label{prop-uniqueness}
For any class $\C$ and any permutation $\pi$ that can be generated by the $\C$-machine, there is a unique generation sequence satisfying (U1) and (U2) that produces $\pi$ from the $\C$-machine. Moreover, the left-to-right maxima of $\pi$ are exactly those symbols produced by bypass operations.
\end{proposition}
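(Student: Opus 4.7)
The plan is to argue by induction on the total number of machine operations that, at every intermediate state encountered during a generation of $\pi$, the conventions (U1) and (U2), together with the requirement that $\pi$ eventually be produced, uniquely determine the next action. Existence of at least one sequence obeying (U1) and (U2) can be read off from the proof of Theorem~\ref{thm-basis}: that construction bypasses precisely the left-to-right maxima of $\pi$ (thereby satisfying (U2)) and empties the container after each bypass as soon as the next left-to-right maximum is about to be inputted, which one checks is (U1)-compliant.

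For uniqueness, fix an intermediate state and let $o$ denote the next entry of $\pi$ to be output, $c$ the leftmost container entry (when the container is nonempty), and $a$ the next input entry. If the container is nonempty and $c = o$, then (U1) forces a pop, since any alternative action would leave a legitimate pop available immediately. Otherwise, a pop is impossible (it would produce an incorrect output) and the machine must consume from the input. The key technical observation is that, under (U1), the equality $a = o$ is equivalent to $a$ being a left-to-right maximum of $\pi$: if $a = o = \pi(k+1)$, then every entry inputted earlier is strictly smaller than $a$, so in particular $\pi(1),\ldots,\pi(k)$ all take values below $a$; conversely, if $a = \pi(j)$ is a left-to-right maximum, then $\pi(1),\ldots,\pi(j-1)$ all take values below $a$ and were inputted before $a$, and (U1) forces them to already lie in the output (otherwise the next desired output would coincide with the leftmost container entry, contradicting the assumption that we are not in the pop case). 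Thus (U2) prescribes a bypass when $a$ is a left-to-right maximum and a push otherwise, and each of these is the only legitimate choice.

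Finally, when a push is performed, the specific position of $a$ within the container is itself forced. The invariant is that the left-to-right order of the container entries at any moment coincides with their relative order of appearance in $\pi$: pops only remove the leftmost entry, pushes do not perturb the relative order of the existing entries, and a popped entry necessarily appears in the output before every entry still to its right in the container. Consequently, $a$ must be inserted in the unique slot determined by the position of $a$ in $\pi$ relative to the positions of the current container entries; that this slot is admissible in $\C$ follows from our standing assumption that $\pi$ is generated by the $\C$-machine. The main obstacle in this plan is verifying the equivalence highlighted in the second paragraph, since it requires using (U1) inductively to exclude states in which the desired next output is buried inside the container rather than lying at its left end. Once this is established the case analysis is routine, the induction closes, and both existence and uniqueness follow.
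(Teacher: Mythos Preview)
Your proof is correct and follows essentially the same approach as the paper's: both argue that at each step the next operation is forced by (U1), (U2), and the target permutation $\pi$, with the push position determined by the relative order of entries in $\pi$. Your version is more explicit about the equivalence between ``$a=o$'' and ``$a$ is a left-to-right maximum'' and about the container-order invariant, whereas the paper leaves these verifications implicit.
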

\begin{proof}
Suppose that $\pi$ can be generated by the $\C$-machine. Clearly we can find a generation sequence for $\pi$ which satisfies (U1) and (U2), so it suffices to show that this generation sequence is uniquely determined by $\pi$, (U1), and (U2).

At the point when $\pi(i)$ is the next symbol in the input, all smaller symbols lie either in the container or the output. By (U1), we must first pop all symbols that we can before doing anything to $\pi(i)$. By (U2), if $\pi(i)$ is a left-to-right maximum, it must bypass the container. Otherwise $\pi(i)$ is not a left-to-right maximum so it must be pushed into the container (since it is preceded in $\pi$ by some greater symbol which is still in the input) and since container symbols are output in left-to-right order its placement relative to the other entries (if any) currently held in the container is uniquely determined by its position in $\pi$.
\end{proof}

\subsection{Structure of the paper}

As will be demonstrated, if a class can be generated by a $\C$-machine, then it is fairly automatic to use the machine to determine a set of functional equations (including catalytic variables) for its generating function. Roughly speaking the remainder of this paper consists of an exploration of that claim in a series of examples where dealing with the resulting generating functions becomes more and more difficult.

We begin with the classical cases enumerated by the Catalan and Schr\"oder numbers in Section~\ref{sec-catalan}. Here the solutions of the functional equations can be derived easily using the kernel method. There is still value in bringing the $\C$-machine context into play here as it establishes \emph{uniform} arguments for these Wilf-equivalences and also provides bijections between the corresponding classes that preserve both the values and positions of the left-to-right maxima.

In Section~\ref{sec-other-examples} we consider two cases (the ``Fibonacci machines'') where algebraic solutions of the functional equations can still be rigorously obtained.

Section~\ref{sec-finite-bounded} establishes some general results which show that for very small classes $\C$, the corresponding classes produced by their $\C$-machines have easily computed enumerations.

Already in some of the earlier sections, but particularly when we proceed to Section~\ref{sec-non-D-finite}, it is clear that while the translation from $\C$-machines to functional equations is `fairly' automatic, it can require some effort to simplify these functional equations into a form that either permits a solution or allows for the efficient generation of a large number of terms. In the latter case it is often more useful to consider how to represent the internal state of the $\C$-machine using a small number of parameters. When this is possible, dynamic programming approaches allow for the generation of thousands of initial terms of the corresponding sequences.

Alas, sometimes this simplification proves impossible, as for the notoriously unenumerated class $\Av(4231)$. While we may view this class as the output of the $\Av(231)$-machine, that perspective does not improve our knowledge of its enumeration. However, the $\C$-machine approach does allow us to compute a great number of terms for some of its subclasses. To pick an example we find particularly alluring, in Section~\ref{sec-non-D-finite} we show how to generate 5,000 terms of the enumeration of the class $\Av(4231,4123,4312)$. Despite the abundance of data we have for this example, we are not able to fit its generating function to any algebraic differential equation. Interestingly this means that in the chain of classes
\[
	\Av(4231,4123,4312)
	\subset
	\Av(4231,4312)
	\subset
	\Av(4231),
\]
the first class is easy to enumerate (we can compute terms in polynomial time) seems to lack a D-finite generating function, the second has an algebraic generating function (see Section~\ref{sec-schroeder} where we analyze it as a $\C$-machine), and the third seems very difficult to enumerate (the current record is 50 terms, computed by Conway, Guttmann, and Zinn-Justin~\cite{conway:1324-avoiding-p:} building on the work of Johansson and Nakamura~\cite{johansson:using-functiona:} and Conway and Guttmann~\cite{conway:on-the-growth-r:}).

Noonan and Zeilberger~\cite{noonan:the-enumeration:} conjectured in 1996 that every finitely based permutation class has a D-finite generating function. Zeilberger changed his mind about the conjecture less than a decade later (see \cite{elder:problems-and-co:}) and Garrabrant and Pak~\cite{Garrabrant:Pattern-avoidan:} have recently disproved it. We believe that the class
\[
	\Av(4231,4123,4312)
\]
represents a good candidate to be the first \emph{concrete} counterexample to the false conjecture.

\section{Catalan and Schr\"oder classes}
\label{sec-catalan}

In this section we show how the perspective of $\C$-machines allows us to define length-preserving bijections between a number of classes enumerated by the Catalan and Schr\"oder numbers. While this ground is well-trodden, we believe that at the very least the $\C$-machine perspective presents a particularly straightforward view of these Wilf-equivalences.


\subsection{Catalan classes}

The rules (U1) and (U2) of Section~\ref{sec-unique} implicitly give a bijection between $\Av(312)$ and $\Av(321)$. When generating a permutation with either the $\Av(12)$- or $\Av(21)$-machine, we must pop whenever possible and all left-to-right maxima must bypass the container. Moreover, since in either case the contents of the container must form a monotone sequence, whenever we push into the container, there is a unique position for the new entry to be placed. In fact, this argument establishes that there is a unique bijection between $\Av(312)$ and $\Av(321)$ that preserves the locations and values of left-to-right maxima. This bijection is equivalent, by symmetry, to one presented by Knuth~\cite{knuth:the-art-of-comp:1}.

Note that this bijection also restricts to a bijection between permutations that can be generated by the $\Av(12,k\cdots 21)$- and $\Av(21,12\cdots k)$-machines, implying that the classes $\Av(312,(k+1)\dots 21)$ and $\Av(321, (k+1)12\dots k)$ are Wilf-equivalent. This result was first established by Chow and West~\cite{chow:forbidden-subse:}, who showed that the generating functions of these classes are quotients of Chebyshev polynomials. Of course, these generating functions simply count Dyck paths of maximum height $k$. 

We now consider a different viewpoint which will become necessary when we analyze more complicated machines. We can think of the $\Av(21)$-machine operating under the rules (U1) and (U2) as being in one of two states that we call ``can pop'' and ``can't pop''. The machine is in the ``can't pop'' state whenever we have just pushed a symbol into the container and in the ``can pop'' state at all other times, as shown in Figure~\ref{fig-Av21-automata}.
%
%

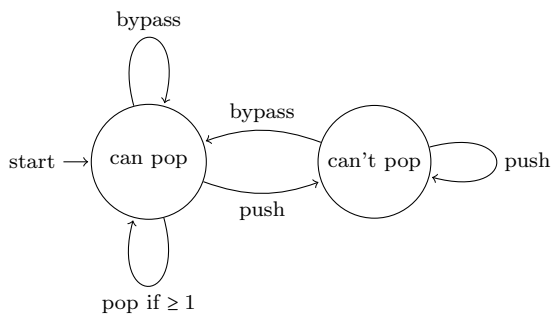
\begin{figure}
	\begin{footnotesize}
	\begin{center}
		\begin{tikzpicture}[shorten >=1pt,node distance=3cm,on grid,auto] 
			\node[state,initial] (canpop)   {\hspace{.42em}can pop\hspace*{.42em}};
			\node[state] (cantpop) [right =of canpop] {can't pop}; 
			\path[->] 
				(canpop) edge[out=-20,in=200] node[swap, sloped, anchor=north] {push} (cantpop)
					edge [loop above] node[anchor=south] {bypass} ()
					edge [loop below] node[anchor=north] {pop if $\geq1$} ()
				(cantpop) edge[out=160,in=20] node[sloped, anchor=south] {bypass} (canpop)
					edge [loop right] node[anchor=west] {push} ();
		\end{tikzpicture}
	\end{center}
	\end{footnotesize}
	\caption{An automaton representing the $\Av(21)$-machine}
	\label{fig-Av21-automata}
\end{figure}

Let $f(x,u)$ denote the generating function for paths to the ``can pop'' state, where $x$ tracks the number of output symbols, and $u$ tracks the number of symbols in the container. Also let $g(x,u)$ denote the generating function for paths to the ``can't pop'' state with the same variables. By considering all possible transitions among these two states, we derive the system of equations
\begin{eqnarray*}
	f(x,u)&=&1+x(f(x,u)+g(x,u))+\frac{x}{u}(f(x,u)-f(x,0)),\\
	g(x,u)&=&u(f(x,u)+g(x,u)).
\end{eqnarray*}

Then by a simple application of the kernel method\footnote{In fact, the original inspiration for the kernel method came from Knuth's solution~\cite[Solution 2.2.1.4]{knuth:the-art-of-comp:1} to this very problem (though he did not use this language or the same functional equation).} we get
\[
f(x,0)=(1-\sqrt{1-4x})/2x.
\]


%
%
%
%
%
%
%
%

\subsection{The Schr\"oder Classes}
\label{sec-schroeder}

It is an easy computation to show that the classes defined by avoiding two patterns of length four (the \emph{2$\times$4 classes}) form $56$ symmetry classes. After a significant amount of work~\cite{bona:the-permutation:,kremer:permutations-wi:,kremer:postscript:-per:,kremer:finite-transiti:,le:wilf-classes-of:}, it has been shown that these $56$ symmetry classes fall into $38$ Wilf equivalence classes, of which explicit generating functions have been found for all but $5$. By far the largest of these Wilf equivalence classes consists of $10$ symmetry classes enumerated by the Schr\"oder numbers (this Wilf equivalence class was found by Kremer~\cite{kremer:permutations-wi:,kremer:postscript:-per:}). Of these $10$ symmetry classes, $6$ can be generated by $\C$-machines, in a completely parallel manner, as we describe in this section.

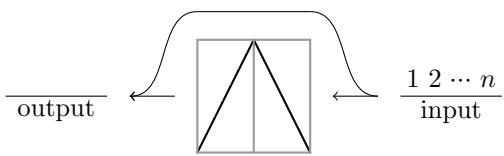
\begin{figure}
\begin{center}
	\begin{tikzpicture}[scale=1.5, baseline=(current bounding box.center)]
		\draw [thick, line cap=round] (0,0)--(0.5,1);
		\draw [thick, line cap=round] (0.5,1)--(1,0);
		\foreach \i in {0,1}{
			\draw [lightgray, thick, line cap=round] (0,\i)--(1,\i);
		}
		\foreach \i in {0,0.5,1}{
			\draw [lightgray, thick, line cap=round] (\i,0)--(\i,1);
		}
		
		%
		\draw (1.8,0.5)--(2.7,0.5) node [above=2pt, below, midway] {input} node [above, midway] {$1\ 2\ \cdots\ n$};
		\draw (-1.7,0.5)--(-0.8,0.5) node [above=2pt, below, midway] {output};
		\draw[->] (1.6,0.5)--(1.2,0.5); 
		\draw[->] (-0.2,0.5)--(-0.6,0.5);   
		\draw (1,1.25) to [out=0, in=180] (1.6, 0.5);
		\draw (1,1.25)--(0,1.25);
		\draw[->] (0,1.25) to [out=180, in=0] (-0.6, 0.5);
	\end{tikzpicture}	
\end{center}
\caption{The $\Av(312,213)$-machine generates a Schr\"oder class.}
\label{fig-Av-312-213-machine}	
\end{figure}

The first Schr\"oder class we consider is $\Av(4312,4213)$, which is generated by the $\Av(312,213)$-machine shown in Figure~\ref{fig-Av-312-213-machine}. As indicated by the dark lines in this figure, permutations in $\Av(312,213)$ consist of an increasing sequence followed by a decreasing sequence.

By (U1) and (U2), pops and bypasses in the $\Av(312,213)$-machine function the same as they do in the $\Av(21)$-machine, but pushes function differently. When the container is empty there is only one position to push into, and when the container is nonempty there are two positions to push into: either immediately to the left of the maximum entry in the container or immediately to the right of this entry. By making a small variation to the functional equations for the $\Av(21)$-machine, we are led to the system
\begin{eqnarray*}
	f(x,u)&=&1+x(f(x,u)+g(x,u))+\frac{x}{u}(f(x,u)-f(x,0)),\\
	g(x,u)&=&2u((f(x,u)-f(x,0))+g(x,u))+uf(x,0).
\end{eqnarray*}
Here the $f(x,u)$ equation has stayed the same, but the $g(x,u)$ equation has changed to reflect the number of positions we may push into.

This example is sufficiently simple to solve by hand using the kernel method and yields
\[
	f(x,0)=\frac{3-x-\sqrt{1-6x+x^2}}{2}. 
\]

%

\begin{figure}
\begin{footnotesize}
\begin{center}
\begin{tabular}{ccccc}
	\begin{tikzpicture}[scale=1.5, baseline=(current bounding box.center)]
		\draw [thick, line cap=round] (0,1)--(0.5,0);
		\draw [thick, line cap=round] (0.5,0)--(1,1);
		\foreach \i in {0,1}{
			\draw [lightgray, thick, line cap=round] (0,\i)--(1,\i);
		}
		\foreach \i in {0,0.5,1}{
			\draw [lightgray, thick, line cap=round] (\i,0)--(\i,1);
		}
		
	\end{tikzpicture}	
&\quad\quad&
	\begin{tikzpicture}[scale=1.5, baseline=(current bounding box.center)]
		\draw [thick, line cap=round] (0,0.25)--(0.25,0);
		\draw [thick, line cap=round] (0.25,0.5)--(0.5,0.25);
		\draw [thick, line cap=round] (0.5,0.75)--(0.75,0.5);
		\foreach \i in {0,0.25,0.5,0.75,1}{
			\draw [lightgray, thick, line cap=round] (0,\i)--(1,\i);
		}
		\foreach \i in {0,0.25,0.5,0.75,1}{
			\draw [lightgray, thick, line cap=round] (\i,0)--(\i,1);
		}
		\useasboundingbox (current bounding box.south west) rectangle (current bounding box.north east);
		
		\node [rotate=45] at (0.89,0.89) {{\footnotesize $\dots$}};
	\end{tikzpicture}
&\quad\quad&
	\begin{tikzpicture}[scale=1.5, baseline=(current bounding box.center)]
		\draw [thick, line cap=round] (0.25,0.5)--(0.5,0.75);
		\draw [thick, line cap=round] (0.5,0.25)--(0.75,0.5);
		\draw [thick, line cap=round] (0.75,0)--(1,0.25);
		\foreach \i in {0,0.25,0.5,0.75,1}{
			\draw [lightgray, thick, line cap=round] (0,\i)--(1,\i);
		}
		\foreach \i in {0,0.25,0.5,0.75,1}{
			\draw [lightgray, thick, line cap=round] (\i,0)--(\i,1);
		}
		\useasboundingbox (current bounding box.south west) rectangle (current bounding box.north east);
		\node [rotate=-45] at (0.13,0.87) {{\footnotesize $\dots$}};
		
	\end{tikzpicture}\\[23pt]
$\Av(132,231)$
&&
$\Av(312,231)$
&&
$\Av(213,132)$\\[15pt]	
\quad\quad&
	\begin{tikzpicture}[scale=1.5, baseline=(current bounding box.center)]
		\draw [thick, line cap=round] (0,0)--(0.75,0.75);
		
		\foreach \i in {0,0.25,0.5,0.75,1}{
			\draw [lightgray, thick, line cap=round] (0,\i)--(1,\i);
		}
		\foreach \i in {0,0.25,0.5,0.75,1}{
			\draw [lightgray, thick, line cap=round] (\i,0)--(\i,1);
		}
		\useasboundingbox (current bounding box.south west) rectangle (current bounding box.north east);
		
		\draw[fill=black] (0.25,0) circle (1pt);
		\draw[fill=black] (0.5,0.25) circle (1pt);
		\draw[fill=black] (0.75,0.5) circle (1pt);
		\node [rotate=45] at (0.89,0.89) {{\footnotesize $\dots$}};
	\end{tikzpicture}	
&\quad\quad&
	\begin{tikzpicture}[scale=1.5, baseline=(current bounding box.center)]
		\draw [thick, line cap=round] (0.25,0.75)--(1,0);
		\foreach \i in {0,0.25,0.5,0.75,1}{
			\draw [lightgray, thick, line cap=round] (0,\i)--(1,\i);
		}
		\foreach \i in {0,0.25,0.5,0.75,1}{
			\draw [lightgray, thick, line cap=round] (\i,0)--(\i,1);
		}
		\useasboundingbox (current bounding box.south west) rectangle (current bounding box.north east);
		\node [rotate=-45] at (0.13,0.87) {{\footnotesize $\dots$}};
		\draw[fill=black] (0.25,0.5) circle (1pt);
		\draw[fill=black] (0.5,0.25) circle (1pt);
		\draw[fill=black] (0.75,0) circle (1pt);
	\end{tikzpicture}
\\[23pt]
&
$\Av(321,312)$
&&
$\Av(213,123)$
\end{tabular}
\end{center}
\end{footnotesize}
\caption{Five classes whose machines generate Schr\"oder classes}
\label{fig-more-schroder}
\end{figure}
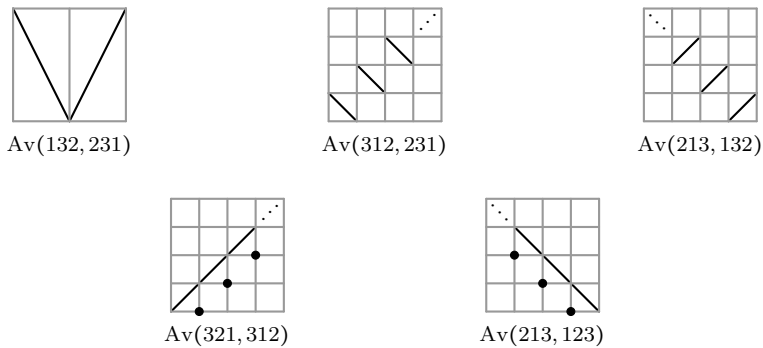


\begin{proposition}
The permutation classes $\Av(4312,$ $4213)$, $\Av(4132,$ $4231)$, $\Av(4312,$ $4231)$, $\Av(4213,$ $4132)$, $\Av(4321,$ $4312)$, and $\Av(4213,$ $4123)$ are all enumerated by the Schr\"oder numbers. Furthermore, there are length-preserving bijections between them that also preserve the location and value of the left-to-right maxima.
\end{proposition}

\begin{proof}
Figure~\ref{fig-Av-312-213-machine} and Figure~\ref{fig-more-schroder} show the six classes whose associated machines generate the Schr\"oder classes listed. In each case it is evident that, when the machine contains one or more elements there are exactly two ways to insert a new maximum element. Recall that Proposition~\ref{prop-uniqueness} guarantees that given any permutation $\pi$ that can be generated by a $\C$-machine, there is a unique generation sequence satisfying (U1) and (U2). Since all these machines have the same generation sequences this provides length preserving bijections between the corresponding classes produced, and these bijections preserve the location and values of left-to-right maxima because these are precisely the elements of the output produced by bypass operations.
\end{proof}

As far as we are aware, bijections between these classes preserving the location and values of the left-to-right maxima have not been presented in the literature before, although their existence also follows from the results of Bloom and Elizalde~\cite[Section 6]{bloom:pattern-avoidan:}.



The remaining four Schr\"oder classes are:
\[
\Av(4312, 3412), \Av(4213, 2413), \Av(4213, 3214), \Av(3142, 2413).
\]
None of these (or any of their symmetries) have both basis elements beginning with a $4$ and therefore they cannot be enumerated by the mechanisms of $\C$-machines as we are presenting them here. While it might be possible to generalize or vary the operation of $\C$-machines to account for these classes, it is easy to see by direct computation that there cannot be bijections that preserve the positions and values of left-to-right maxima between the six Schr\"oder classes we have considered and the other four Schr\"oder classes. Thus such variations would at the very least modify the type of permutation statistics that are preserved by a consideration of distinct $\C$-machines operating with a common operation sequence.


\section{Fibonacci Machines}
\label{sec-other-examples}

Here we consider the class of permutations $\F_\oplus$ formed by sums of the permutations $1$ and $21$ and the symmetric class of permutations $\F_\ominus$ formed by skew sums of the permutations $1$ and $12$ as shown in Figure~\ref{fig-fib-classes} (the presence of two dots in each cell indicates that we may put zero, one, or two entries in each cell). We call these classes the \emph{Fibonacci classes}, as the number of permutations of length $n$ in each class is the $n$th Fibonacci number, $F_n$, with initial conditions $F_0 = F_1 = 1$.

At first glance it might seem that if two permutation classes $\C$ and $\D$ are Wilf-equivalent then so should be the classes generated by the $\C$-machine and the $\D$-machine. However, the operation of a machine is not symmetric: input arrives at the top, but output is produced from the left. Unless there is a bijection underlying the original Wilf-equivalence that respects this asymmetry, the corresponding machines will behave differently. 
To give a concrete example of why this is the case with respect to the $\F_\oplus$- and $\F_\ominus$-machines, suppose we fill the $\F_\oplus$-machine with $2143$, then perform a bypass, and then a pop. The container will then hold $143$, and there is a unique way to perform a push, then a bypass, and then empty the machine. On the other hand, the analogous generation sequence applied to the $\F_\ominus$-machine would tell us to fill it with $3412$, then perform a bypass and a pop. At that point the container will hold $412$, leaving us with two ways to perform a push (resulting in either $5412$ or $4512$), then a bypass, and then to empty the machine.

\begin{figure}
\begin{center}
\begin{tabular}{ccc}
		\begin{tikzpicture}[scale=1.3, baseline=(current bounding box.center)]
		\foreach \i in {0,0.25,0.5,0.75,1}{
			\draw [lightgray, thick, line cap=round] (0,\i)--(1,\i);
		}
		\foreach \i in {0,0.25,0.5,0.75,1}{
			\draw [lightgray, thick, line cap=round] (\i,0)--(\i,1);
		}
		\foreach \i in {0,.25,.5}{
			\draw[fill=black] (0.08333+\i,0.16667+\i) circle (1pt);
			\draw[fill=black] (0.16667+\i,0.08333+\i) circle (1pt);
		}
		%
		\draw (1.8,0.5)--(2.7,0.5) node [above=2pt, below, midway] {input} node [above, midway] {$1\ 2\ \cdots\ n$};
		\draw (-1.7,0.5)--(-0.8,0.5) node [above=2pt, below, midway] {output};
		\draw[->] (1.6,0.5)--(1.2,0.5); 
		\draw[->] (-0.2,0.5)--(-0.6,0.5);   
		\draw (1,1.25) to [out=0, in=180] (1.6, 0.5);
		\draw (1,1.25)--(0,1.25);
		\draw[->] (0,1.25) to [out=180, in=0] (-0.6, 0.5);
		\node [rotate=45] at (0.89,0.89) {{\footnotesize $\dots$}};
	\end{tikzpicture}	
&\quad\quad&
	\begin{tikzpicture}[scale=1.3, baseline=(current bounding box.center)]
		\foreach \i in {0,0.25,0.5,0.75,1}{
			\draw [lightgray, thick, line cap=round] (0,\i)--(1,\i);
		}
		\foreach \i in {0,0.25,0.5,0.75,1}{
			\draw [lightgray, thick, line cap=round] (\i,0)--(\i,1);
		}
		\foreach \i in {0,.25,.5}{
			\draw[fill=black] (0.33333+\i,0.58333-\i) circle (1pt);
			\draw[fill=black] (0.41667+\i,0.66667-\i) circle (1pt);
		}
		%
		\draw (1.8,0.5)--(2.7,0.5) node [above=2pt, below, midway] {input} node [above, midway] {$1\ 2\ \cdots\ n$};
		\draw (-1.7,0.5)--(-0.8,0.5) node [above=2pt, below, midway] {output};
		\draw[->] (1.6,0.5)--(1.2,0.5); 
		\draw[->] (-0.2,0.5)--(-0.6,0.5);   
		\draw (1,1.25) to [out=0, in=180] (1.6, 0.5);
		\draw (1,1.25)--(0,1.25);
		\draw[->] (0,1.25) to [out=180, in=0] (-0.6, 0.5);
		\node [rotate=-45] at (0.13,0.87) {{\footnotesize $\dots$}};
	\end{tikzpicture}
\end{tabular}
\end{center}
\caption{The $\F_\oplus$- and $\F_\ominus$-machines}
\label{fig-fib-classes}
\end{figure}
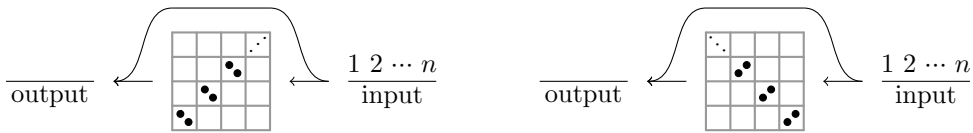

It is known that $\F_\oplus = \Av(231, 312, 321)$ and $\F_\ominus = \Av(123, 132, 213)$. Thus Theorem~\ref{thm-basis} implies that the $\F_\oplus$-machine generates the class $\Av(4231,$ $4312,$ $4321)$, while the $\F_\ominus$-machine generates the class $\Av(4123,$ $4132,$ $4213)$. Note that these classes are both subclasses of Schr\"oder classes considered in the previous section.


\subsection{The $\F_\oplus$-Machine}

In this subsection we consider the class $\Av(4231,4312,4321)$ generated by the $\F_\oplus$-machine. 

\begin{theorem}
The permutation class $\Av(4231,4312,4321)$ generated by the $\F_\oplus$-machine has a generating function which is algebraic of degree 4 and growth rate
\[
	\frac{\left(3-\sqrt{5}\right)\left(7 + 3\sqrt{5} + 2\sqrt{22+10\sqrt{5}}\right)}{4},
\]
approximately $5.16207$.
\end{theorem}

\begin{proof}
We start by crafting an automaton to represent the $\F_\oplus$-machine, similar to that in Figure~\ref{fig-Av21-automata} for the $\Av(21)$-machine. However, this automaton is more complicated than any of the corresponding automata for the Catalan and Schr\"oder classes due to one important fact: the number of places where we can push the next element into the machine can vary between $1$ and $2$ (in the machines for the Catalan classes it was always $1$, and in the machines for the Schr\"oder classes, it was always $2$ so long as the machine was non-empty). As such, the automaton for the $\F_\oplus$-machine, shown in Figure~\ref{fig-fib-fsa}, has 5 states: $E$ represents an empty machine, $S_p$ (resp.~$S_n$) represent states in which the rightmost layer has only one entry and pops are permitted (resp.~forbidden) and $D_p$ (resp.~$D_n$) represent states in which the rightmost layer has two entries and pops are permitted (resp.~forbidden).


\begin{figure}[ht!]
	\begin{footnotesize}
	\begin{center}
		\begin{tikzpicture}[shorten >=1pt,node distance=3cm,on grid,auto] 
			\useasboundingbox (-1.5,-3.5) rectangle (7,3.5);
			\node[state,initial] (E)   {$E$}; 
			\node[state] (Sp) [above right=of E] {$S_p$}; 
			\node[state] (Sn) [below right=of E] {$S_n$}; 
			\node[state] (Dp) [right=of Sp] {$D_p$};
			\node[state] (Dn) [right=of Sn] {$D_n$};
			\path[->] 
				(E) edge node[swap, sloped, anchor=south] {push} (Sn)
					edge [out=270, in=230, loop] node[anchor=north] {bypass} ()
				(Sp) edge node[sloped, anchor=south] {pop if $=1$} (E)
					edge [loop above] node {bypass} ()
					edge [loop left] node {pop if $>1$} ()
					edge [out=280, in=80] node[sloped, anchor=south] {push} (Sn)
					edge node[sloped, anchor=south] {push} (Dn)
				(Sn) edge [loop below] node {push} ()
					edge [out=100, in=260] node[sloped, anchor=south, rotate=180] {bypass} (Sp)
					edge [out=-10, in=190] node[sloped, anchor=north] {push} (Dn)
					edge [out=-45, in=-45, looseness=2] node[sloped, anchor=north] {push} (Dp)
				(Dn) edge [out=170, in=10] node[sloped, anchor=south] {push} (Sn)
					edge node[sloped, anchor=south, rotate=180] {bypass} (Dp)
				(Dp) edge [loop right] node {bypass} ()
					edge [loop above] node {pop if $>2$} ()
					edge node[sloped, anchor=south] {pop if $=2$} (Sp);
		\end{tikzpicture}
	\end{center}
	\end{footnotesize}
	\caption{An automaton representing the $\F_\oplus$-machine}
	\label{fig-fib-fsa}
\end{figure}
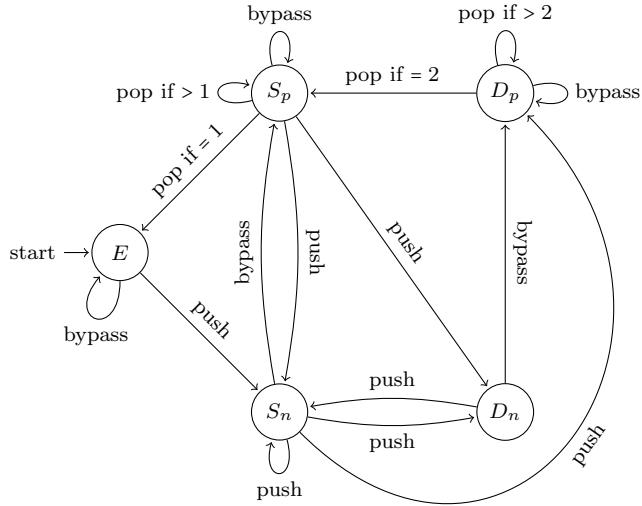

Let $E(x)$, $S_p(x,u)$, $S_n(x,u)$, $D_p(x,u)$, $D_n(x,u)$ be the generating functions that track states as described above, such that $x$ counts the number of entries that have been output (via pops and bypasses) and $u$ counts the number of entries in the machine \emph{excluding the rightmost layer}. The automaton in Figure~\ref{fig-fib-fsa} translates to the following system of functional equations.
\begin{align*}
	E(x) &= 1 + xE(x) + xS_p(x,0)\\
	S_p(x,u) &= x(S_n(x,u) + S_p(x,u)) + \frac{x}{u}\left(S_p(x,u) - S_p(x,0)\right) + x\left(D_p(x,0)\right)\\
	S_n(x,u) &= E(x) + u(S_n(x,u) + S_p(x,u)) + u^2(D_n(x,u) + D_p(x,u))\\
	D_p(x,u) &= x(D_n(x,u) + D_p(x,u)) + \frac{x}{u}\left(D_p(x,u) - D_p(x,0)\right)\\
	D_n(x,u) &= S_n(x,u) + S_p(x,u)
\end{align*}

%

Note that because, for example, in $S_p(x,u)$ the variable $u$ tracks the contents of the machine not considering the rightmost layer, the generating function $S_p(x,0)$ represents states with only a single entry in the machine.

We can use the first, third, and fifth equation to eliminate $E(x)$, $S_n(x,u)$, and $D_n(x,u)$ from the system, leaving
\begin{align*}
	S_p(x,u) &= \frac{x(1-u^2)}{u(1-u-u^2)}S_p(x,u)-\frac{1-x-u-u^2+xu^2}{u(1-x)(1-u-u^2)}S_p(x,0)\\
	& \qquad\qquad	 + \frac{xu^2}{1-u-u^2}D_p(x,u) + xD_p(x,0) + \frac{x}{(1-x)(1-u-u^2)}\\[10pt]
	D_p(x,u) &= \frac{x}{1-u-u^2}S_p(x,u) + \frac{x^2}{(1-x)(1-u-u^2)}S_p(x,0)\\
	& \qquad\qquad  + \frac{x(1-2u^2)}{u(1-u-u^2)}D_p(x,u) - \frac{x}{u}D_p(x,0) + \frac{x}{(1-x)(1-u-u^2)}
\end{align*}
Solving the second equation for $D_p(x,u)$ and substituting into the first leaves a single equation in terms of $S_p(x,u)$, $S_p(x,0)$, and $D_p(x,0)$. Solving that equation for $S_p(x,u)$ and clearing denominators gives
\begin{equation}
	K(x,u)S_p(x,u) = R_1(x,u)S_p(x,0) + R_2(x,u)D_p(x,0) + R_3(x,u)
	\tag{$\dagger$}\label{equation:kernel}
\end{equation}
with
\begin{align*}
	K(x,u) &= (x-1)({u}^{4}-3{u}^{3}x+{u}^{2}{x}^{2}+{u}^{3}-u{x}^{2}-{u}^{2}+2xu-{x}^{2}) \\
	R_1(x,u) &= -x ( {u}^{3}x-{u}^{2}{x}^{2}-{u}^{3}+2{u}^{2}x+u{x}^{2}-{u}^{2}-xu+{x}^{2}+u-x)
\\
	R_2(x,u) &= xu( x-1 )( {u}^{3}-{u}^{2}x+{u}^{2}-u+x )
\\
	R_3(x,u) &= - xu(xu-u+x).
\end{align*}

Theorem 2 of Bousquet-M\'elou and Jehanne~\cite{bousquet-melou:polynomial-equa:} confirms that there are four fractional power series $u_1(x), \ldots, u_4(x)$, counted with multiplicity, such that $K(x,u_i(x)) = 0$. By checking initial terms, we find in fact that the $u_i(x)$ are distinct. Moreover, substituting any $u_i(x)$ into Equation \eqref{equation:kernel} produces a new equation
\[
	0 = R_1(x,u_i(x))S_p(x,0) + R_2(x,u_i(x))D_p(x,0) + R_3(x,u_i(x)).
\]
Combining the equation above for $i=1,2$ with the two equations $K(x,u_1(x))=0$ and $K(x,u_2(x))=0$ gives a system of four polynomial equations from which we might hope to eliminate the variables $u_1$, $u_2$, and $D_p(x,0)$. This elimination can be performed efficiently through the use of Gr\"obner bases\footnote{One must also add an additional equation $X\cdot (u_1-u_2)-1$ to force the distinctness of $u_1$ and $u_2$.}. As a result, we find that
\[
	0 = x\cdot (1+S_p(x,0)) \cdot A(x,S_p(x,0)),
\]
where
\begin{align*}
	A(x,S_p(x,0)) &= (2x^3+8x^2-x)S_p(x,0)^4	-(x^4+3x^3-58x^2+19x-1)S_p(x,0)^3\\
	& \qquad +(3x^4-30x^3+130x^2-56x+7)S_p(x,0)^2\\
	& \qquad -(x^4+3x^3-58x^2+19x-1)S_p(x,0)\\
	& \qquad +(2x^3+8x^2-x).
\end{align*}
Hence, $A(x,S_p(x,0))$ is the minimal polynomial for the algebraic generating function $S_p(x,0)$. One can combine this minimal polynomial with the equation $E(x) = 1 + xE(x) + xS_p(x,0)$ (for example, with a resultant calculation) to find the minimal polynomial for $E(x)$:
\begin{align*}
	(2x^2+8x-1)E(x)^4
	+(x^3+4x^2-46x+5)E(x)^3\\
	+(3x^3-21x^2+94x-9)E(x)^2\\
	+(x^3+12x^2-82x+7)E(x)\\
	+3x^2+26x-2
	& = 0.
\end{align*}
At this point Maple can explicitly solve for $E(x)$ and inspection reveals that the growth rate of the class is as stated in the theorem.
\end{proof}

The enumeration of this class is given by \OEIS{A257561}.

%


\subsection{The $\F_\ominus$-Machine}

The $\F_\ominus$-machine differs from the $\F_\oplus$-machine because in the $\F_\ominus$-machine a pop can reduce the size of the leftmost layer --- which in this case is the layer we might push into --- thereby opening up more possibilities for the next push and forcing us in some sense to remember the size of the layer to its right (in case a pop empties the leftmost layer).

\begin{theorem}
The class $\Av(4123,4132,4213)$ generated by the $\F_{\ominus}$-machine has generating function $s(x)$ which is algebraic of degree 3 and its growth rate is
\[
	\text{\footnotesize $\frac{67240 + (779\sqrt{57}-1927)(1502 + 342\sqrt{57})^{1/3} - (19\sqrt{57} - 457)(1502 + 342\sqrt{57})^{2/3}}{40344}$},
\]
approximately $5.21914$.
\end{theorem}

\begin{proof}
Because of the difference between the $\F_{\ominus}$- and $\F_{\oplus}$-machines noted above, a direct approach to computing the generating function of $\Av(4123,$ $4132,$ $4213)$ based around an iterative scheme for accounting for the generation sequences is not feasible. For this reason we construct a context-free grammar instead. 

Let $W_n$ be the language of words (tracking states of the $\F_\ominus$-machine) that begin from a state where the leftmost layer is a single entry with no immediate legal pop and end with the same entry alone in the leftmost layer with a pop now allowed. Similarly, $R_n$ will be the language of words beginning in a state where the leftmost layer contains two entries with no immediate legal pop and ending with the same two entries in the leftmost layer with a pop now allowed. Let $W_p$ (resp., $R_p$) be the language of words that start with a single entry (resp., two entries) in the leftmost layer with a legal pop allowed and end with the same single entry (resp., two entries) in the leftmost layer with a legal pop allowed.

These definitions yield the following context-free grammar for legal operation sequences in the $\F_\ominus$-machine.
%
\[
	\begin{array}{cccclllll}
		S &\longrightarrow & \epsilon & \; |  & xS & \; | \; & (+w)W_n(-w)S &\\
		W_p &\longrightarrow & \epsilon & \; |  & xW_p & \; | \; & (+w)W_n(-w)W_p & \; | \; & (+r)R_n(-r)W_p\\
		W_n &\longrightarrow & & &  xW_p & \; | \; & (+w)W_n(-w)W_p & \; | \; & (+r)R_n(-r)W_p\\
		R_p &\longrightarrow & \epsilon  & \; |  & xR_p &  \; | \; & (+w)W_n(-w)R_p \\
		R_n &\longrightarrow & & & xR_p  & \; | \; & (+w)W_n(-w)R_p
	\end{array}
\]
The nonterminals $S$, $W_p$, and $R_p$ each have a production rule to $\epsilon$ because the starting condition satisfies the ending condition for each of these languages, whereas this is not true for $W_n$ and $R_n$. The production rules of the form $T \longrightarrow xT$ represent a bypass operation. As popping is always permitted after a bypass, the bypass is always followed by a state in which popping is legal (e.g., $W_n \longrightarrow xW_p$).

The remaining production rules correspond to pushing an element in a new layer (represented by $(+w)$) or adding an entry to an existing layer of size one (represented by $(+r)$), then performing an appropriate sequence $W_n$ or $R_n$, then popping the entry added earlier (represented by $(-w)$ or $(-r)$). Lastly, each of these production rules ends by allowing a repeated occurrence of either $W_p$ in the case where the production symbol is $W_p$ or $W_n$ or of $R_p$ in the case where the production symbol is $R_p$ or $R_n$.

This context-free grammar is unambiguous because in every rule the start symbols of the various cases are distinct. Hence, we can translate the grammar to equations, replacing $(-w)$ and $(-r)$  with $x$ to keep track of pop operations, and ignoring $(+w)$ and $(+r)$ because we do not need to keep track of pushes. This yields the following system.
\begin{align*}
	s &= 1 + xs + xw_ns\\
	w_p &= 1 + xw_p + xw_nw_p + xr_nw_p\\
	w_n &= xw_p + xw_nw_p + xr_nw_p\\
	r_p &= 1 + xr_p + xw_nr_p\\
	r_n &= xr_p + xw_nr_p
\end{align*}
Another Gr\"obner basis calculation reveals that $s$ satisfies
\[
	1 + (x-1)s(x) - xs(x)^2 + xs(x)^3.
\]
This implies that the growth rate of the class is as stated in the theorem.
\end{proof}

The enumeration of the class is given by \OEIS{A106228}.

%

\section{Machines for restricted classes}
\label{sec-finite-bounded}

As the reader may have noticed, the analysis of $\C$-machines typically depends on the  specific structure of the class $\C$ itself. However, by placing restrictions on the characteristics of the class $\C$ it is possible to establish some general characteristics of the classes corresponding to the associated $\C$-machine. In this section we explore three such restrictions for which we are able to establish general results: to finite, bounded, and polynomial classes.

\subsection{Finite classes}

First we consider the case where $\C$ is a finite class. Following Albert, Atkinson, and Ru\v{s}kuc~\cite{albert:regular-closed-:}, we say that the \emph{rank} of the entry $\pi(i)$ is the number of entries below it and to its right. When $\C$ is finite, the class of permutations that can be generated by the $\C$-machine necessarily has bounded rank. Moreover, because every finite class has a finite basis (an easy consequence of the Erd\H{o}s--Szekeres Theorem), the class of permutations that can be generated by the $\C$-machine has a finite basis, and the results of \cite{albert:regular-closed-:} imply that this class has a rational generating function\footnote{Indeed, these classes fall under the purview not only of the rank encoding, but also of the finitely labeled generating trees of Vatter~\cite{vatter:finitely-labele:} and the insertion encoding of Albert, Linton, and Ru\v{s}kuc~\cite{albert:the-insertion-e:}.}.

\begin{theorem}
\label{thm-finite-machine}
If $\C$ is a finite class then the class of permutations that can be generated by the $\C$-machine has a rational generating function.	
\end{theorem}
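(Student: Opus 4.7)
The plan is to follow the outline given in the paragraph preceding the statement. Two ingredients are needed: the class generated by the $\C$-machine has bounded rank, and it is finitely based. With these, the main theorem of Albert, Atkinson, and Ru\v{s}kuc~\cite{albert:regular-closed-:} yields rationality via the rank encoding.

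Set $k = \max_{\sigma \in \C} |\sigma|$, which is finite since $\C$ is. At every moment during any generation sequence the container holds at most $k$ entries, since its contents read left to right are order isomorphic to some permutation in $\C$. I would then argue that every entry of every generated permutation has rank at most $k$: fix a generated $\pi$ and an entry $\pi(i)$, and consider the instant just before $\pi(i)$ is output (either by pop or by bypass). Because the input is $12\cdots n$, every value strictly less than $\pi(i)$ has already been read, so at that instant each such value is either already in the output (hence at a position $<i$) or currently in the container (hence destined for a position $>i$). Thus the rank of $\pi(i)$---the number of entries smaller than $\pi(i)$ appearing to its right in $\pi$---equals the number of entries smaller than $\pi(i)$ currently in the container, which is at most $k$.

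For the finite basis, Theorem~\ref{thm-basis} identifies the basis of the generated class as $\{1 \ominus \beta : \beta \in B\}$, where $B$ is the basis of $\C$. The finite class $\C$ has a finite basis, since each basis element has length at most $k+1$ (removing any entry from a basis element yields a permutation in $\C$, of length at most $k$) and there are only finitely many permutations of any fixed length. Hence $\{1 \ominus \beta : \beta \in B\}$ is finite, and the generated class is finitely based. Combined with the bounded rank established above, \cite{albert:regular-closed-:} concludes the proof. I expect no serious obstacle; the only substantive step is the rank bound, which follows cleanly from the container size bound, and the real work is packaged into the cited result. As a self-contained alternative one could build a weighted finite automaton whose states are pairs (container shape in $\C$, ``can pop''/``can't pop'' flag), weight pushes by $1$ and pops and bypasses by $x$, and apply the transfer matrix method, using Proposition~\ref{prop-uniqueness} to biject walks from the empty state back to itself with the generated permutations.
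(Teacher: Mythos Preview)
Your proposal is correct and follows essentially the same approach as the paper: bounded rank plus finite basis, then invoke the rank-encoding result of Albert, Atkinson, and Ru\v{s}kuc. Your argument is more detailed than the paper's (which is only the paragraph preceding the statement); the only cosmetic difference is that the paper attributes the finite basis of $\C$ to the Erd\H{o}s--Szekeres Theorem rather than your direct length bound, and your automaton alternative is exactly what the paper alludes to in the accompanying footnote.
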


More in the spirit of the ``$\C$-machine approach'' an alternative way to prove this theorem would be to note that for a finite class $\C$ there are only finitely many states that the $\C$-machine can take (at most twice the total number of permutations in $\C$ allowing for the ``can't pop'', ``can pop'' distinction). Therefore, after choosing an alphabet that allows us to represent all the different ways that a new maximum element can be added to the machine, the $\C$-machine can be thought of as a finite state automaton. The conclusion of the theorem follows immediately.


\subsection{Polynomial classes}

We next consider the case where $|\C_n|$ is bounded by a polynomial (in $n$), in which case we call $\C$ a \emph{polynomial class}. Kaiser and Klazar~\cite{kaiser:on-growth-rates:} established two significant results regarding polynomial classes. First, they showed that polynomial classes are actually enumerated by polynomials for sufficiently large $n$ (i.e., they are not just polynomially bounded). Second, they showed that if the enumeration of a class is ever less than the $n$th \emph{combinatorial Fibonacci number} (defined by $F_0=F_1=1$ and $F_n=F_{n-1}+F_{n-2}$) then the class is a polynomial class. This second statement is referred to as the Fibonacci Dichotomy. Later, Huczynska and Vatter~\cite{huczynska:grid-classes-an:} reproved the Fibonacci Dichotomy using what are known as grid classes and gave an explicit characterization of polynomial classes. While we do not need to appeal to the details of this characterization, we do require the following fact that follows from it.

\begin{proposition}
\label{prop-poly-fin-basis}
Every polynomial class is finitely based.	
\end{proposition}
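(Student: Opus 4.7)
The plan is to use the explicit grid-class structure underlying Theorem~\ref{thm-poly-tfae}. The characterization of Huczynska and Vatter in fact shows that every polynomial class $\C$ is contained in a monotone grid class $\Grid(M)$ for some finite $\zpm$ matrix $M$ whose combinatorial form is tightly constrained by the absence of the obstructions in Figure~\ref{fig-mono-grid-obstructions}. The first step is to extract this containment from the results behind Theorem~\ref{thm-poly-tfae}.

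I would then invoke two properties of such restricted grid classes $\Grid(M)$. First, $\Grid(M)$ itself has a finite basis, which is standard for monotone grid classes of $\zpm$ matrices with this combinatorial restriction. Second, $\Grid(M)$ is well-quasi-ordered: each of its permutations can be encoded as a word over the finite alphabet of cells of $M$, permutation containment is dominated by subword containment of these encodings, and Higman's lemma on finite-alphabet subwords yields the wqo conclusion.

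Finally, let $B$ be the basis of $\C$, that is, the antichain of minimal permutations outside $\C$. For each $\beta \in B$, every proper subpermutation of $\beta$ lies in $\C \subseteq \Grid(M)$. If $\beta \notin \Grid(M)$, then $\beta$ is a minimal permutation outside $\Grid(M)$ and hence belongs to the finite basis of $\Grid(M)$. Otherwise $\beta \in B \cap \Grid(M)$, which is an antichain in the well-quasi-ordered class $\Grid(M)$ and therefore finite. Thus $B$ is the union of two finite sets, proving the proposition.

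The main obstacle is establishing the well-quasi-order and finite basis for the specific family of grid classes in play; the containment $\C \subseteq \Grid(M)$ is packaged into the Huczynska-Vatter characterization, and the concluding basis-splitting argument is routine once those two properties are in hand.
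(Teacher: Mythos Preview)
The paper does not supply its own proof of Proposition~\ref{prop-poly-fin-basis}; it simply records that the result is established in the conclusion of Huczynska and Vatter~\cite{huczynska:grid-classes-an:} and follows more generally from Vatter~\cite[Theorem 6.2]{vatter:small-permutati:}. So there is no in-paper argument to compare against.

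Your sketch is correct and is, in outline, precisely the argument one finds in those references: embed the polynomial class $\C$ in a grid class of the appropriate restricted shape (the paper phrases this as $\C\subseteq\Grid(\tilde{\rho})$ for a single peg permutation $\tilde{\rho}$, which is the same thing as your monotone $\Grid(M)$ with a constrained $M$), use that this ambient grid class is both finitely based and well-quasi-ordered (via the word encoding and Higman's lemma), and then split the basis of $\C$ into the part outside $\Grid(\tilde{\rho})$, contained in the finite basis of $\Grid(\tilde{\rho})$, and the part inside, a finite antichain by wqo. The only places where real work is required are exactly the two you flag---finite basis and wqo for the ambient grid class---and both are standard for grid classes of peg permutations (equivalently, monotone grid classes whose row-column graph is a forest). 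Nothing is missing from your plan; it just reproduces the cited argument rather than offering an alternative.
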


Proposition~\ref{prop-poly-fin-basis} is explicitly proved in the conclusion of Huczynska and Vatter~\cite{huczynska:grid-classes-an:} and also follows from the later and more general Vatter~\cite[Theorem 6.2]{vatter:small-permutati:}.

Our result about polynomial classes requires one further notion. Inspired by Wilf's infuential \emph{Monthly} article ``What is an answer?''~\cite{wilf:what-is-an-answ:}, Zeilberger~\cite{Zeilberger:Enumerative-and:} defined a \emph{Wilfian formula} for the sequence $\{a_n\}$ to be a polynomial-time (in $n$) algorithm that computes $a_n$. For example, an algebraic generating function can easily be converted into a Wilfian formula (one needs only to compute derivatives), but many sequences that do not have algebraic generating functions still have Wilfian formulas (e.g., the Catalan numbers modulo $2$).

\begin{theorem}
\label{thm-poly-machine}
If $\C$ is a polynomial class then the class of permutations that can be generated by the $\C$-machine has a Wilfian formula.
\end{theorem}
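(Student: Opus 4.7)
The plan is to construct a polynomial-time dynamic program that, on input $n$, computes the number of length-$n$ permutations generated by the $\C$-machine. By Proposition~\ref{prop-uniqueness}, each such permutation corresponds to a unique canonical generation sequence, so it suffices to count those sequences that terminate with $n$ output symbols and an empty container. I will represent the state of the machine after any prefix by a triple $(j,\sigma,f)$, where $j$ is the current output length, $\sigma\in\C$ is the pattern of the container, and $f\in\{+,-\}$ records whether the previous action was not a push ($+$, popping permitted) or was a push ($-$, popping forbidden). The number of input symbols already consumed equals $j+|\sigma|$ and therefore need not be tracked separately.

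Let $U(j,\sigma)$ and $V(j,\sigma)$ denote the number of canonical prefixes reaching the corresponding state with flag $+$ and $-$ respectively. The three operations of the machine translate into:
\begin{itemize}
\item \emph{Bypass}: $U(j,\sigma)+V(j,\sigma)$ is added to $U(j+1,\sigma)$, provided $j+|\sigma|<n$;
\item \emph{Pop}: $U(j,\sigma)$ is added to $U(j+1,\sigma')$ whenever $|\sigma|\ge 1$, where $\sigma'$ is the pattern of $\sigma$ with its leftmost entry removed;
\item \emph{Push}: $U(j,\sigma)+V(j,\sigma)$ is added to $V(j,\sigma'')$ for each $\sigma''\in\C$ obtained by inserting a new maximum entry into $\sigma$, provided $j+|\sigma|<n$.
\end{itemize}
Since pushes do not advance $j$, within a fixed level one updates $V(j,\cdot)$ by iterating over container patterns in order of increasing size; the target count is $U(n,\emptyset)$. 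By Proposition~\ref{prop-poly-fin-basis} the class $\C$ is finitely based, so $\C_{\le n}$ can be enumerated in polynomial time (for instance, by incrementally extending smaller members and checking each extension for avoidance of the basis elements). Because $|\C_k|$ is polynomial in $k$ by hypothesis, the DP has polynomially many states with at most $O(n)$ outgoing transitions each, giving total running time polynomial in $n$.

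The principal obstacle is to verify that the DP counts exactly canonical sequences, so that distinct contributions to $U(n,\emptyset)$ correspond to distinct generated permutations. The flag $f$ enforces (U1) by design, blocking pops immediately after a push, and a short case analysis shows that at any moment where popping is legal and the leftmost container entry is the next required output, any alternative choice either outputs the wrong value or leaves the machine in a configuration from which the intended permutation cannot be completed. Rule (U2) is automatic as well: when input value $v$ is next read every previously-output value is strictly less than $v$, so a bypass of $v$ necessarily makes it a left-to-right maximum of $\pi$; conversely, if $v$ is pushed then before $v$ can be popped a bypass must occur (to re-enable popping), and that bypass outputs an input value greater than $v$ ahead of $v$, precluding $v$ from being a left-to-right maximum. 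Hence $U(n,\emptyset)$ equals the number of length-$n$ permutations generated by the $\C$-machine, yielding the desired Wilfian formula.
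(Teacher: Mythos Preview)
Your proof is correct and follows essentially the same approach as the paper's: build a polynomial-size automaton whose states are pairs (container pattern in $\C_{\le n}$, can-pop flag), and count walks of the appropriate weighted length---you phrase this as a dynamic program indexed additionally by the output count $j$, which is just the walk-length bookkeeping made explicit. Your uniqueness discussion is slightly more detailed than the paper's (which simply invokes the can-pop/can't-pop convention established in Section~\ref{sec-catalan}); the one phrasing to tighten is that the flag does not ``enforce (U1)'' per se---rather, the no-pop-after-push rule is a \emph{consequence} of (U1) and (U2), and your case analysis correctly shows it is also \emph{sufficient} to make the generation sequence uniquely determined by the output permutation, which is what the count needs.
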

\begin{proof}
Let $\C$ be a polynomial class and choose a polynomial $c(n)$ such that $|\C_n|\le c(n)$ for all $n$. By Proposition~\ref{prop-poly-fin-basis}, $\C=\Av(B)$ for a finite set $B$. Let $m$ denote the length of the longest basis element of $B$. Thus we can determine whether a permutation of length $n$ lies in $\C$ in time $b(n)=|B|{n\choose m}$. We seek to show that there is a polynomial $p(n)$ such that we can determine the number of permutations of length $n$ that can be generated by the $\C$-machine in time at most $p(n)$.

To accomplish this, we create an automaton that has two states for each permutation of length at most $n$ in $\C$. Of these two states, one corresponds to the ``can pop'' condition and the other to the ``can't pop'' condition, while the permutation associated to the state records the order isomorphism type of the contents of the machine. We can build this automaton by working up from the states corresponding to the empty permutation by considering all possible pushes, pops (if the ``can pop'' condition is true for that state), and bypasses. For each state whose corresponding permutation has length $k$, there are at most $k+3$ such actions. Pops and bypasses are trivial to analyze, while for each possible push we can determine if the push leads to a permutation in $\C$ in time $b(k+1)$. Therefore we can construct this automaton in time at most
\[
	\sum_{k=0}^{n-1} (k+3) \mkern1.0mu b(k+1) \mkern1.0mu  c(k), 
\]
which is a polynomial of degree at most $2+m+\deg c$. To compute $|\C_n|$ from this automaton we simply count the number of closed walks beginning and ending at the state corresponding to the empty permutation that contain a total of $n$ pops and bypasses. As the automaton has only a polynomial number of states, the number of these walks can be computed in polynomial time.
\end{proof}

The argument above carries through almost directly when $\C$ is not quite a polynomial class, but instead the sum closure of a polynomial class. For example the permutations in $\Av(231,321)$ are all sums of permutations of the form $k12\cdots(k-1)$. The corresponding machine is analyzed in the next section. 

Needless to say, the algorithm described in the proof of Theorem~\ref{thm-poly-machine} should not be implemented. To obtain a more practical algorithm for enumerating these $\C$-machines, one would want to implement a dynamic programming algorithm exploiting the specific structure of $\C$. We present several examples of this in Section \ref{sec-non-D-finite}.

\subsection{Bounded classes}

We conclude this section with the consideration of \emph{bounded classes}: those classes $\C$ for which there exists an integer $c$ such that $|\C_n| \leq c$ for all $n \geq 0$. Obviously bounded classes are a special case of polynomial classes, but because our result is stronger we must describe the structure of bounded classes in more detail. In doing so we follow Homberger and Vatter~\cite{homberger:on-the-effectiv:}.

An \emph{interval} in a permutation is a sequence of contiguous entries whose values form an interval of natural numbers. A \emph{monotone interval} is an interval in which the entries are monotone (increasing or decreasing).  Given a permutation $\sigma$ of length $m$ and nonempty permutations $\alpha_1,\dots,\alpha_m$, the \emph{inflation} of $\sigma$ by $\alpha_1,\dots,\alpha_m$ is the permutation $\pi=\sigma[\alpha_1,\dots,\alpha_m]$ obtained by replacing each entry $\sigma(i)$ by an interval that is order isomorphic to $\alpha_i$, while maintaining the relative order of the intervals themselves.  For example,
\[
	3142[1,321,1,12]=6\ 321\ 7\ 45.
\]

We define a \emph{peg permutation} to be a permutation where each entry is decorated with a $+$, $-$, or $\bullet$, such as
\[
\tilde{\rho}=\d3\m1\d4\p2
\]
The \emph{grid class} of the peg permutation $\tilde{\rho}$, denoted $\Grid(\tilde{\rho})$, is the set of all permutations that may be obtained by inflating $\rho$ (the underlying, non-decorated version of $\tilde{\rho}$) by monotone intervals of type determined by the signs of $\tilde{\rho}$: $\rho(i)$ may be inflated by an increasing (resp., decreasing) interval if $\tilde{\rho}(i)$ is decorated with a $+$ (resp., $-$) while it may only be inflated by a single entry (or the empty permutation) if $\tilde{\rho}(i)$ is dotted.  Thus if $\pi\in\Grid(\tilde{\rho})$ then its entries can be partitioned into monotone intervals which are ``compatible'' with $\tilde{\rho}$.

Given a set $\tilde{G}$ of peg permutations, we denote the union of their corresponding grid classes by
\[
\Grid(\tilde{G})=\bigcup_{\tilde{\rho}\in\tilde{G}} \Grid(\tilde{\rho}).
\]

In their characterization of polynomial classes, Huczynska and Vatter~\cite{huczynska:grid-classes-an:} proved that every polynomial class is contained in $\Grid(\tilde{\rho})$ for a single peg permutation $\tilde{\rho}$. From this and the work of Albert, Atkinson, Bouvel, Ru\v{s}kuc, and Vatter on atomic geometric grid classes~\cite[Theorem 10.3]{albert:geometric-grid-:}, the following result follows.

\begin{theorem}
\label{thm-poly-grids}
For every polynomial class $\C$ there is a finite set $\tilde{G}$ of peg permutations such that $\C=\Grid(\tilde{G})$.	
\end{theorem}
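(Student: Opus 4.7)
The plan is to combine the two ingredients announced in the paragraph preceding the statement: the Huczynska--Vatter containment result and Theorem~10.3 of Albert, Atkinson, Bouvel, Ru\v{s}kuc, and Vatter on atomic geometric grid classes. First I would invoke Huczynska--Vatter to fix a single peg permutation $\tilde{\rho}$ with $\C\subseteq \Grid(\tilde{\rho})$. The grid class of a peg permutation is precisely a $\zpm$ geometric grid class whose underlying $0/\mathord{\pm}1$ matrix has at most one non-zero entry in each row and column, so everything that follows will take place inside a fixed geometric grid class.

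Next I would invoke Theorem~10.3 of \cite{albert:geometric-grid-:}, which asserts that every subclass of a geometric grid class decomposes as a finite union of its atomic subclasses, and that each such atomic subclass is itself a geometric grid class. Applying this to $\C\subseteq\Grid(\tilde{\rho})$ produces a finite decomposition
\[
\C = \C_1\cup\C_2\cup\cdots\cup\C_k
\]
where each $\C_i$ is a geometric grid class sitting inside $\Grid(\tilde{\rho})$. Since $\C_i$ is cut out by only keeping certain cells of $\tilde{\rho}$ (and possibly restricting a $\pm$ cell to $+$, to $-$, or to a single point $\bullet$), each $\C_i$ is itself the grid class of a peg permutation $\tilde{\rho}_i$ obtained from $\tilde{\rho}$ by deleting some entries and/or refining some signs. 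Collecting these, $\tilde{G}=\{\tilde{\rho}_1,\ldots,\tilde{\rho}_k\}$ gives the desired finite set with $\C=\Grid(\tilde{G})$.

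The main obstacle I anticipate is purely one of translation rather than content: bridging the formalism of geometric grid classes used in \cite{albert:geometric-grid-:} with the peg-permutation formalism adopted here. Concretely one must check that the atomic components produced by Theorem~10.3, when the ambient class is $\Grid(\tilde{\rho})$, really are of the form $\Grid(\tilde{\sigma})$ for a peg permutation $\tilde{\sigma}$ and not a more general $\zdpm$ gridding. This is straightforward because the ambient matrix already has at most one non-zero entry per row and column, and the refinement operations (splitting a $\pm$ cell into $+$, $-$, or $\bullet$, or deleting a cell) preserve this property; the only verification needed is that these operations exhaust the atomic subclasses of $\Grid(\tilde{\rho})$. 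Once that translation is carried out, the theorem follows immediately from the two cited results with no further calculation.
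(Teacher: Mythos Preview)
Your proposal is correct and follows exactly the route the paper itself indicates: the paper does not prove this theorem but simply states that it follows from Huczynska--Vatter together with Theorem~10.3 of \cite{albert:geometric-grid-:}, and you have filled in precisely that derivation. One small caveat on the step you flag as the main obstacle: the peg permutations $\tilde{\rho}_i$ produced by the atomic decomposition need not be obtained from $\tilde{\rho}$ merely by deleting entries or changing decorations---the construction in \cite{albert:geometric-grid-:} may first pass to a refinement of the ambient matrix, so the $\tilde{\rho}_i$ can be longer than $\tilde{\rho}$---but since any refinement of a partial permutation $\zpm$ matrix is again a partial permutation matrix, your conclusion that each atom is the grid class of a peg permutation is unaffected.
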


The containment relation on $\mathbb{N}^m$ 
is  a partial order. Thus we may define downsets (sets closed downward under containment) and upsets of vectors. The intersection of a downset and an upset is referred to as a \emph{convex set}.

We say that $\vec{v}$ \emph{fills} the peg permutation $\tilde{\rho}$ if $\vec{v}(i) = 1$ whenever $\tilde{\rho}(i)$ is decorated with a $\bullet$ and $\vec{v}(i) \geq 2$ whenever $\tilde{\rho}(i)$ is decorated with a $+$ or $-$.
Given any peg permutation $\tilde{\rho}$ of length $m$ and a set of vectors $\mathcal{V}\subseteq\mathbb{P}^m$ that fill $\tilde{\rho}$, we define
\[
	\tilde{\rho}[\mathcal{V}]=\bigcup_{\vec{v}\in\mathcal{V}} \tilde{\rho}[\vec{v}].
\]
We now have all the terminology and notation to state the relevant structure theorem.

\begin{theorem}[Homberger and Vatter~\cite{homberger:on-the-effectiv:}]
\label{thm-polynomial-main}
For every polynomial permutation class $\C$ there is a finite set $\tilde{G}$ of peg permutations, each associated with its own convex set $\mathcal{V}_{\tilde{\sigma}}$ of vectors of positive integers which fill it, such that $\C$ can be written as the disjoint union
\[
	\C=\biguplus_{\tilde{\rho}\in\tilde{G}} \tilde{\rho}[\mathcal{V}_{\tilde{\rho}}].
\]	
\end{theorem}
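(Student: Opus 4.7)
The plan is to refine the grid-class decomposition provided by Theorem~\ref{thm-poly-grids} into a canonical one, and then to verify that each resulting piece is convex. By Theorem~\ref{thm-poly-grids} we may begin with a finite set $\tilde{G}_0$ of peg permutations such that $\C=\bigcup_{\tilde{\rho}\in\tilde{G}_0}\Grid(\tilde{\rho})$, so every $\pi\in\C$ admits at least one representation as an inflation $\tilde{\rho}[\vec{v}]$. The representation is generally not unique, the obstruction being that two adjacent peg entries whose signs and values allow them to be merged into a single peg entry---as when $\p2\p3$ collapses to $\p2$ or $\d1\p2$ collapses to $\p1$---permit re-splittings of $\vec{v}$. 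Declare $\tilde{\rho}$ to be \emph{reduced} if no such merge is available.

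My first step would be to show, by a short confluence argument, that every $\pi\in\C$ has a unique expression $\tilde{\rho}_\pi[\vec{v}_\pi]$ with $\tilde{\rho}_\pi$ reduced. Letting $\tilde{G}$ be the (finite) collection of reduced peg permutations arising in this way, and setting $\mathcal{V}_{\tilde{\rho}}=\{\vec{v}_\pi\st\tilde{\rho}_\pi=\tilde{\rho}\}$, we immediately obtain the disjoint decomposition
\[
	\C=\biguplus_{\tilde{\rho}\in\tilde{G}}\tilde{\rho}[\mathcal{V}_{\tilde{\rho}}].
\]

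It then remains to check convexity of each $\mathcal{V}_{\tilde{\rho}}$. Since reducedness is a property of $\tilde{\rho}$ alone, $\mathcal{V}_{\tilde{\rho}}$ consists of precisely those $\vec{v}\in\mathbb{P}^{|\tilde{\rho}|}$ that fill $\tilde{\rho}$ and for which $\tilde{\rho}[\vec{v}]$ avoids every basis element of $\C$. The basis is finite by Proposition~\ref{prop-poly-fin-basis}, and if $\vec{v}\le\vec{v}'$ coordinatewise then $\tilde{\rho}[\vec{v}]$ embeds as a pattern into $\tilde{\rho}[\vec{v}']$ (choose the same relative positions inside each inflated block), so containment of any fixed basis element is preserved upwards. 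The avoidance condition therefore cuts out a downset in $\mathbb{P}^{|\tilde{\rho}|}$. The filling condition itself requires $v_i=1$ at dotted positions (a downset condition, since $v_i\in\mathbb{P}$ already forces $v_i\ge 1$) and $v_i\ge 2$ at signed positions (an upset condition), so $\mathcal{V}_{\tilde{\rho}}$ is the intersection of downsets with an upset---equivalently, the intersection of a downset with an upset, hence convex in the sense defined in the text.

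The principal obstacle is the confluence step in the canonicalization. Establishing that iterated merges always terminate at the same reduced peg permutation, independent of the order in which merges are performed, requires a finite case analysis over the types of adjacent pairs (dot--dot, dot--sign, and sign--sign with matching orientation). In each case two simultaneously available merges either touch disjoint positions---and so trivially commute---or share a single middle position, in which case performing either merge first enables exactly one further merge and both orderings yield the same result. Once confluence, and hence uniqueness of the reduced representation, is in hand, disjointness of the union is automatic, and the convexity check of the preceding paragraph completes the proof.
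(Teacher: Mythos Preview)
The paper does not supply its own proof of Theorem~\ref{thm-polynomial-main}; it is quoted from Homberger and Vatter~\cite{homberger:on-the-effectiv:} as a structural input. So there is no in-paper argument to compare against, and the relevant question is whether your sketch stands on its own. In outline it does, and it is essentially the Homberger--Vatter strategy: pass from arbitrary $\Grid(\tilde{\rho})$ representations to a canonical reduced one, and then read off convexity from the filling and avoidance constraints.

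Two points deserve more care. First, your confluence argument is phrased on peg permutations, but what you actually need is uniqueness of the reduced \emph{filling} representation of a fixed permutation $\pi$. Local confluence of merges gives a unique normal form from any one starting representation; it does not by itself say that two unrelated representations of the same $\pi$ reduce to the same thing. The missing observation is that every filling representation of $\pi$ is a coarsening of the all-dots gridding (each entry its own cell), so all representations share a common refinement, and confluence from that common ancestor then yields the global uniqueness you need. Second, you assert that $\tilde{G}$ is finite but do not say why. This follows because starting from a representation with $\tilde{\rho}\in\tilde{G}_0$, deleting empty cells, dotting singleton $\pm$-cells, and then merging never increases the length of the peg permutation, so $|\tilde{\rho}_\pi|\le\max_{\tilde{\rho}\in\tilde{G}_0}|\tilde{\rho}|$; this bound should be stated.

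Your convexity paragraph is correct: the avoidance condition defines a downset (since $\vec{v}\le\vec{v}'$ implies $\tilde{\rho}[\vec{v}]\le\tilde{\rho}[\vec{v}']$), filling is itself convex, and the intersection of a convex set with a downset is convex.
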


This theorem is more useful than Theorem \ref{thm-poly-grids} for the description of an enumerative scheme leading to Theorem \ref{thm-bounded-machine} below precisely because the union it describes is disjoint.

We establish our result about bounded classes using \emph{counter automata}, which are finite state automata with the additional ability to store a single nonnegative integer called a \emph{counter}. When determining which transition to take, a counter automaton is allowed to check if the value of the counter is $0$, and during each transition the value of the counter may be incremented or decremented by $1$. Equivalently, for any fixed positive integer $N$ and all $n$ satisfying $0\le n\le N$, a counter automaton is allowed to check if the value of the counter is equal to $n$ and is allowed to increase or decrease the counter by $n$. Deterministic counter automata are a proper subset of deterministic pushdown automata and therefore the languages they accept have algebraic generating functions (see Droste, Kuich, and Vogler~\cite[Chapter 7]{:Handbook-of-wei:}).

\begin{theorem}
\label{thm-bounded-machine}
If $\C$ is a bounded class then the class of permutations that can be generated by the $\C$-machine has an algebraic generating function.
\end{theorem}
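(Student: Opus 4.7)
The plan is to build an unambiguous counter automaton whose accepting runs of length $n$ are in bijection with the permutations of length $n$ generated by the $\C$-machine. The fact cited in the excerpt---that languages accepted by deterministic counter automata have algebraic generating functions---will then give the conclusion immediately.

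First I would unpack the structure of a bounded class. Since every bounded class is polynomial, Theorem~\ref{thm-polynomial-main} applies, so $\C$ decomposes as a finite disjoint union $\C = \biguplus_{\tilde\rho \in \tilde{G}} \tilde\rho[\mathcal{V}_{\tilde\rho}]$. The boundedness hypothesis $|\C_n| \le c$ forces each $\mathcal{V}_{\tilde\rho}$ to be essentially one-dimensional: a convex subset of $\mathbb{P}^m$ containing vectors with arbitrarily large entries in two distinct coordinates would, by a trade-off between those coordinates, yield $\Omega(n)$ vectors of coordinate sum $n$, contradicting boundedness. Hence each $\mathcal{V}_{\tilde\rho}$ is supported on vectors in which all coordinates are fixed except at most one, which ranges over a contiguous subset of $\mathbb{P}$. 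Consequently every infinite piece of $\C$ is obtained by inflating a single distinguished monotone entry of $\tilde\rho$ by an interval of variable length $k$, and every permutation in $\C$ is encoded by a pair consisting of a ``shape'' drawn from a finite set together with a single nonnegative integer parameter.

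Next I would define the automaton. Its finite control tracks the shape of the current container contents together with a ``can pop'' flag reflecting rule (U1), while its counter stores the integer $k$ describing the length of the one variable monotone interval. Bypass, pop, and push transitions are dictated by the $\C$-machine together with (U1) and (U2): a bypass fixes the shape, leaves the counter unchanged, and sets the can-pop flag; a pop (only legal from a can-pop state) decrements the counter when the leftmost container entry lies in the monotone interval, and otherwise changes the shape, with the case of an interval-emptying pop handled by a test of the counter against the fixed value $1$; a push inserts the incoming entry (necessarily larger than every container entry) either within the monotone interval, incrementing the counter, or at a position that forces a change of shape. Proposition~\ref{prop-uniqueness} makes this automaton unambiguous, so the generating function counting its accepting runs by the bypass and pop letters is precisely that of $\Av(1\ominus B)$, where $B$ is the basis of $\C$ (Theorem~\ref{thm-basis}).

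The main obstacle is the structural step: rigorously proving that a convex subset of $\mathbb{P}^m$ all of whose fixed-sum slices are of bounded size must be one-dimensional in the sense above, and then organising the resulting finite data so that every $\C$-machine transition can be implemented using only a finite-state control and a counter that is incremented, decremented, or tested against a fixed bound, rather than requiring access to the full counter value. Once this bookkeeping is in place the cited algebraicity of languages accepted by deterministic counter automata closes the proof.
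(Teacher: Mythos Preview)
Your proposal is essentially the paper's own argument: apply Theorem~\ref{thm-polynomial-main}, use boundedness to force all but one coordinate of each $\mathcal{V}_{\tilde\rho}$ to be bounded, store the bounded part in the finite control of a counter automaton together with a can-pop/can't-pop flag, store the one unbounded coordinate in the counter, and invoke algebraicity of counter-automaton languages. One wording issue: you say the other coordinates are \emph{fixed}, but your trade-off argument only shows they are \emph{bounded}; the paper handles this by introducing a uniform bound $M$ on the non-growing coordinates and absorbing all values up to $M$ into the finite state set, which is exactly the ``organising the resulting finite data'' step you flag as the main obstacle.
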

\begin{proof}
	Suppose $\C$ is a bounded class and let $\tilde{G}$ and the convex sets $\V_{\tilde{\rho}}$ for each $\tilde{\rho}\in\tilde{G}$ be as in the statement of Theorem~\ref{thm-polynomial-main}. We build a counter automaton whose states represent the subpermutation in the container of the $\C$-machine at any point in time. However, as $\C$ contains infinitely many permutations (otherwise it would fall under the purview of Theorem~\ref{thm-finite-machine}) and a standard counter automaton must have a finite number of states, some compression is necessary.
	
	Each $\tilde{\rho}$ comes equipped with a convex set $\V_{\tilde{\rho}}$ of vectors in $\mathbb{N}^{|\tilde{\rho}|}$. Only one component of these vectors is allowed to grow unboundedly as otherwise the class $\C$ would not be bounded. For each $\tilde{\rho}\in\tilde{G}$ let $M_{\tilde{\rho}}$ denote the maximum value of all \emph{other} components  for $\vec{v}\in\V_{\tilde{\rho}}$ and define
	\[
		M = \max(\{M_{\tilde{\rho}} : \tilde{\rho} \in \tilde{G}\}).
	\]
	That is, $M$ is the maximum of all second-largest components over all $\vec{v} \in \V_{\tilde{\rho}}$ and $\tilde{\rho} \in \tilde{G}$.
		
	Any state of the $\C$-machine in which the container holds a subpermutation $\tilde{\rho}[\vec{v}]$ with $\vec{v}(i) \leq M$ for all $i$ is simply represented by a state of the counter automaton labeled $\tilde{\rho}[\vec{v}]$. Any state of the $\C$-machine in which the container holds a subpermutation $\tilde{\rho}[\vec{v}]$ with some $\vec{v}(i) > M$ is represented by a state of the counter automaton labeled
		\[
			\tilde{\rho}[\vec{v}(1), \ldots, \vec{v}(i-1), *, \vec{v}(i+1), \ldots, \vec{v}(|\tilde{\rho}|)].
		\]
	Here the $*$ symbol represents an inflation of size at least $M+1$, and it is this parameter that the counter keeps track of by storing the value $\min\{0, \vec{v}(i)-M\}$.
		
	Next we split each state described above into two copies: one labeled ``can pop'' and one labeled ``can't pop''. We add to this a state labeled $\epsilon$ to account for the empty machine which is both the start state and the unique accepting state. The transitions between each pair of states are readily computed by examining the allowed pushes, pops, and bypasses. Transitions to states with no `$*$' marker must set the counter at $0$, while transitions to states with a `$*$' marker may or may not change the counter (they can also, of course, change the underlying $\tilde{\rho}$).
	
	
	The counter automaton constructed above accepts all valid push/pop sequences that leave the container of the $\C$-machine empty. If transitions are weighted so that those corresponding to bypasses and pops have weight $x$ and those corresponding to pushes have weight $1$, then the weighted generating function counting accepting paths of length $n$ is equal to the generating function for the class generated by the $\C$-machine.
\end{proof}

As with all the results of this section, note that Theorem~\ref{thm-bounded-machine} represents only a sufficient condition for algebraicity. In particular, it does not apply to any of the Schr\"oder machines which nevertheless generate classes with algebraic generating functions.

\section{Potentially Non-D-Finite Classes}
\label{sec-non-D-finite}

Here we present four permutation classes for which, despite the fact that they can be generated by fairly simple $\C$-machines, we do not know (and cannot even conjecture) their generating functions. Indeed, while we can implement the dynamic programming approach hinted at in the proof of Theorem~\ref{thm-poly-machine} to obtain many terms in the counting sequence of these classes (5,000 in the first case we present), we cannot fit a D-finite generating function to any of them. The first case we present has three basis elements of length four while the three following it are so-far-unenumerated ``2$\times$4 classes''. We present the first example in detail but provide only sketches of the (very similar) arguments for the remaining examples.

\subsection{$\Av(4123, 4231, 4312)$}
\label{subsec-Av-4123-4231-4312}

Theorem~\ref{thm-basis} implies that the class $\Av(4123,4231,4312)$ is generated by the $\Av(123,231,312)$-machine. The members of $\Av(123,231,312)$ can be drawn as shown below, where the labeling of entries with an $\textsf{a}$ or $\textsf{b}$ is for the subsequent analysis.

\begin{center}
	\begin{tikzpicture}[scale=2, baseline=(current bounding box.center)]
		\draw [thick, line cap=round] (0,0.5)--(0.5,0);
		\draw [thick, line cap=round] (0.5,1)--(1,0.5);
		\foreach \i in {0,0.5,1}{
			\draw [lightgray, thick, line cap=round] (0,\i)--(1,\i);
			\draw [lightgray, thick, line cap=round] (\i,0)--(\i,1);
		}
		\node at (.1, .1) {\textsf{a}};
		\node at (.6, .6) {\textsf{b}};
	\end{tikzpicture}	
\end{center}

When the container is empty we may only push an \textsf{a} entry. When it contains a decreasing permutation (all of whose entries are viewed as \textsf{a} entries), we may push either an \textsf{a} or a \textsf{b} entry. After pushing a \textsf{b} entry we may only push \textsf{b} entries until we have popped all of the \textsf{a} entries, at which point all current \textsf{b} entries become \textsf{a} entries.

We represent the states of the $\Av(123,231,312)$-machine by triples $(a,b,P)$ where $a$ and $b$ are the number of \textsf{a} and \textsf{b} entries respectively, and $P$ is either \texttt{T(rue)} or \texttt{F(alse)}, depending on whether popping is allowed. The preceding discussion shows:

\begin{proposition}
The transition rules for the $\Av(123,231,312)$-machine are:
\begin{eqnarray*}
	(0, 0, \true)
		&\longrightarrow&
		\{(1, 0, \false), (0, 0, \true)\},\\
	(a, 0, \false)
		&\longrightarrow&
		\{(a+1, 0, \false), (a, 1, \false), (a, 0, \true)\},\\
	(a, 0, \true)
		&\longrightarrow&
		\{(a+1, 0, \false), (a, 1, \false), (a, 0, \true), (a-1, 0, \true)\},\\
	(a, b, \false)
		&\longrightarrow&
		\{(a, b+1, \false), (a, b, \true)\},\\
	(a, b, \true)
		&\longrightarrow&
		\{(a, b+1, \false), (a, b, \true), (a-1, b, \true)\}\quad\mbox{(for $a \ge 2$)},\\
	(1, b, \true)
		&\longrightarrow&
		\{(1, b+1, \false), (1, b, \true), (b, 0, \true)\},
\end{eqnarray*}
where $a, b \ge 1$ unless stated otherwise.
\end{proposition}

These transition rules can be adapted to a dynamic programming algorithm, which can be used to compute the first 5,000 terms of the enumeration in a moderate amount of time. The enumeration of this class is \OEIS{A257562}.

One can also derive a functional equation for the generating function of this class using these transition rules. Define an $A$ state to be one in which there are no \textsf{b} entries and a $B$ state to be one in which there are \textsf{b} entries (and therefore, also \textsf{a} entries). We require that popping is always permitted at the beginning of a $B$ state (we explain this in more detail below). The empty state is considered an $A$ state, and $A$ is also the start state. 

Let $A(a,x)$ be the generating function in which the coefficient of $a^kx^n$ counts the number of ways to reach an $A$ state with $k$ entries labelled \textsf{a} and $n$ entries output so far. Let $B(a,b,x)$ be the generating function in which the coefficient of $a^k b^\ell x^n$ counts the number of ways to reach a $B$ state with $k-1$ entries labelled \textsf{a}, $\ell$ entries labelled \textsf{b}, and $n$ entries output so far. As $B$ tracks one fewer than the number of \textsf{a} entries, it follows that $B(0,b,x)$ enumerates the $B$ states with exactly one \textsf{a} entry.

\begin{proposition}
The generating functions $A(a,x)$ and $B(a,b,x)$ that describe the operation of the $\Av(123,231,312)$-machine satisfy the functional equations
\begin{align*}
	A(a,x) &= 1 + \frac{x}{a}(A(a,x) - A(0,x)) + aA(a,x) + xB(0,a,x),\\
	B(a,b,x) &= \frac{bx}{a(1-b)(1-x)}(A(a,x) - A(0,x)) + \frac{bx}{(1-b)(1-x)}B(a,b,x)\\
	&\qquad\qquad + \frac{x}{a(1-x)}(B(a,b,x) - B(0,b,x)).
\end{align*}
\end{proposition}

\begin{proof}
The $a$ in the denominator in the first term in $B(a,b,x)$ accounts for the fact that $B(a,b,x)$ tracks one fewer than the number of $A(a,x)$.

An $A$ state is reached from another $A$ state either by popping an \textsf{a} entry (if there is one) or by pushing an \textsf{a} entry. We ignore bypasses in this viewpoint; if we pop an \textsf{a} entry while there are no \textsf{b} entries, then that \textsf{a} entry could have been treated as a bypass. An $A$ state is reached from a $B$ state only by popping the last \textsf{a} entry in a $B$ state with a single \textsf{a} entry. Therefore, the generating function $A(a,x)$ satisfies
\[
	A(a,x) = 1 + \frac{x}{a}(A(a,x)-A(0,x)) + aA(a,x) + xB(0,a,x).
\]
The term $1$ accounts for the start state. The term $(x/a)(A(a,x)-A(0,x))$ accounts for popping an \textsf{a} entry if there is one. The term $aA(a,x)$ accounts for pushing an \textsf{a} entry. Lastly, the term $xB(0,a,x)$ accounts for popping the final \textsf{a} from a $B$ state with exactly one \textsf{a} entry, forcing all \textsf{b} entries to become \textsf{a} entries. It is important here that we assumed popping is always permitted in a $B$ state.

We can reach a $B$ state from an $A$ state with at least one \textsf{a} by pushing a \textsf{b}. However, we do not want a term $b(A(a,x)-A(0,x))$ in the functional equation for $B(a,b,x)$ because the state resulting from pushing a single \textsf{b} does not allow for popping --- this would violate our uniqueness conventions, because the entry that can be popped is the leftmost \textsf{a} entry which we could have popped before pushing the \textsf{b} entry. For this reason, we consider more elaborate transitions to $B$ states: instead of pushing a single \textsf{b} entry, we push a sequence of $\textsf{b}$ entries followed by at least one bypass (accounted for by the first term in $B(a,b,x)$ above) while a pop of an $\textsf{a}$ entry may be followed by any number of bypasses (accounted for by the second term above).
\end{proof}

One can in principle iterate this functional equation starting with $A_0(a,x) = 1$ and $B_0(a,b,x) = 0$ to obtain terms of $A(0,x)$. It is clear from the description of pushing and popping that after $2n$ iterations the coefficient of each $x^i$ for $0 \leq i \leq n$ in the resulting $A_{2n}(0,x)$ will match the coefficient of $x^i$ in $A(0,x)$. However, this is much slower than the dynamic programming approach.

After obtaining 5,000 terms of the sequence enumerating $\Av(4123,4231,4312)$ we apply the method of differential approximation~\cite{guttmann:on-a-new-method:}, an experimental method that uses initial terms of a sequence to non-rigorously approximatate asymptotic growth of the form
\[
	C\gamma^n n^{-1-\alpha}.
\]
For this class, we predict $C\approx 0.01897$, $\gamma\approx 4.46841$, and $\alpha=-1$, so we conjecturally have the approximate asymptotics
\[
	|\Av_n(4123,4231,4312)|\sim 0.01897\cdot (4.46841)^n.
\]
We can approximate these constants to around $100$ decimal places of accuracy. 

A function $f(x)$ is said to be \emph{differentially algebraic} if there exists some $k \geq 0$ and some polynomial $P(x_1, x_2, \ldots, x_{k+2})$ such that
	\[
		P(x, f(x), f'(x), \ldots, f^{(k)}(x)) = 0\tag{$\star$}\label{ADE}
	\]
for all $x$. Equation \eqref{ADE} is called an \emph{algebraic differential equation}. All algebraic and differentially finite (D-finite) generating functions are also differentially algebraic.

We have written a Maple program to use terms of a counting sequence to guess an algebraic differential equation which might be satisfied by the generating function of a given sequence. This program has not been able to guess an algebraic differential equation that might be satisfied by the generating function for $\Av(4123,4231,4312)$, despite searching through all possible algebraic differential equations
\[
	P(x, f(x), f'(x), \ldots, f^{(k)}(x)) = 0
\]
with $x_1$-degree at most $d$, differential order at most $k$, and $(x_2,\ldots,x_{k+2})$-total degree at most $m$ such that
\[
	(d+1){m + k + 1 \choose m} + 5 \leq 5000.
\]
In light of this, we make the following conjecture.

\begin{conjecture}
\label{conj-non-ADE-first}
	The generating function of the class $\Av(4123,4231,4312)$ is not differentially algebraic.
\end{conjecture}

In particular, Conjecture~\ref{conj-non-ADE-first} would imply that the generating function for this class is not D-finite. Note that every subclass of $\Av(123,231,312)$ has bounded enumeration, and thus by Theorem~\ref{thm-bounded-machine} their machines generate classes with algebraic generating functions. Thus it appears that the $\Av(123,231,312)$-machine is a minimal non-algebraic machine.

\subsection{Three $2\times 4$ classes}

There are three so-far unenumerated $2\times 4$ classes which may be generated by $\C$-machines. For full details on these three cases, we refer to the third author's thesis~\cite[Chapter 5]{pantone:structural-anal:}, giving only an outline here. The containers of these machines are shown in Figure~\ref{fig-2x4-containers}. The first two containers, $\Av(123,231)$ and $\Av(123,231)$, may be analyzed using three parameters which record the sizes of the three blocks.

\begin{figure}
\begin{center}
	\begin{tikzpicture}[scale=2, baseline=(current bounding box.center)]
		\draw [thick, line cap=round] (0,1)--(0.33,0.67);
		\draw [thick, line cap=round] (0.33,0.33)--(0.67,0);
		\draw [thick, line cap=round] (0.67,0.67)--(1,0.33);
		\foreach \i in {0,0.33,0.67,1}{
			\draw [lightgray, thick, line cap=round] (0,\i)--(1,\i);
			\draw [lightgray, thick, line cap=round] (\i,0)--(\i,1);
		}
	\end{tikzpicture}	
\quad\quad\quad\quad
	\begin{tikzpicture}[scale=2, baseline=(current bounding box.center)]
		\draw [thick, line cap=round] (0,0.67)--(0.33,0.33);
		\draw [thick, line cap=round] (0.33,1)--(0.67,0.67);
		\draw [thick, line cap=round] (0.67,0.33)--(1,0);
		\foreach \i in {0,0.33,0.67,1}{
			\draw [lightgray, thick, line cap=round] (0,\i)--(1,\i);
			\draw [lightgray, thick, line cap=round] (\i,0)--(\i,1);
		}
	\end{tikzpicture}	
\quad\quad\quad\quad
	\begin{tikzpicture}[scale=2, baseline=(current bounding box.center)]
		\draw [thick, line cap=round] (0,0)--(0.75,0.75);
		
		\foreach \i in {0,0.25,0.5,0.75,1}{
			\draw [lightgray, thick, line cap=round] (0,\i)--(1,\i);
		}
		\foreach \i in {0,0.25,0.5,0.75,1}{
			\draw [lightgray, thick, line cap=round] (\i,0)--(\i,1);
		}
		\useasboundingbox (current bounding box.south west) rectangle (current bounding box.north east);
		
		\draw[fill=black] (0,0.25) circle (1pt);
		\draw[fill=black] (0.25,0.5) circle (1pt);
		\draw[fill=black] (0.5,0.75) circle (1pt);
		\node [rotate=45] at (0.89,0.89) {{\footnotesize $\dots$}};
	\end{tikzpicture}
	\end{center}
\caption{From left to right, the classes $\Av(123, 231)$, $\Av(123, 312)$, and $\Av(231,321)$.}
\label{fig-2x4-containers}
\end{figure}
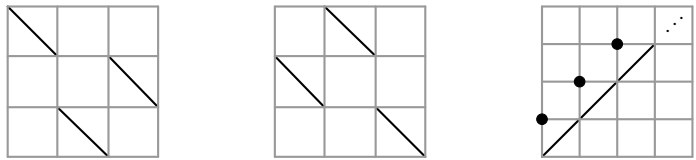

The class $\Av(231,321)$, however, is not a polynomial class; in fact, there are $2^{n-1}$ permutations of length $n$ in this class. As such, the strategy of using a different variable to represent entries in each block is not effective. Instead, we label the entries of the maximum increasing final block (if there are any) by \textsf{a}, we label all entries in the rightmost sum component of the form $1 \ominus (12\cdots\ell)$ by \textsf{b}. The remaining entries of the container are labelled by \textsf{c}. Figure~\ref{fig-Av-231-321-vars} shows three examples. As this assignment implies, we do not need to keep track of the actual shape formed by the \textsf{c} entries, even though they can take many different forms. Once an entry becomes a \textsf{c} entry, it stays a \textsf{c} entry until it is popped.

\begin{figure}
	\begin{footnotesize}
	\begin{center}
		\begin{tikzpicture}[scale=.2, baseline=(current bounding box.center)]
			\foreach [count=\i] \x in {1,2,6,3,4,5,7,11,8,9,10,12,13,14}{
				\draw[fill] (\i, \x) circle (0.2);
			}
			\draw[thick, gray] (.5, .5)  rectangle (7.5,7.5);
			\draw[thick, gray] (7.5, 7.5)  rectangle (11.5,11.5);
			\draw[thick, gray] (11.5, 11.5)  rectangle (14.5,14.5);
			\node at (6.8,1.2) {\textsf{c}};
			\node at (10.8,8.2) {\textsf{b}};
			\node at (13.8,12.2) {\textsf{a}};
		\end{tikzpicture}
		\qquad\qquad
		\begin{tikzpicture}[scale=.2, baseline=(current bounding box.center)]
			\foreach [count=\i] \x in {1,2,6,3,4,5,7,11,8,9,10}{
				\draw[fill] (\i, \x) circle (0.2);
			}
			\draw[thick, gray] (.5, .5)  rectangle (7.5,7.5);
			\draw[thick, gray] (7.5, 7.5)  rectangle (11.5,11.5);
			\node at (6.8,1.2) {\textsf{c}};
			\node at (10.8,8.2) {\textsf{b}};
		\end{tikzpicture}
		\qquad\qquad
		\begin{tikzpicture}[scale=.2, baseline=(current bounding box.center)]
			\foreach [count=\i] \x in {4,1,2,3,5,6,7}{
				\draw[fill] (\i, \x) circle (0.2);
			}
			\draw[thick, gray] (.5, .5)  rectangle (4.5,4.5);
			\draw[thick, gray] (4.5, 4.5)  rectangle (7.5,7.5);
			\node at (3.8,1.2) {\textsf{b}};
			\node at (6.8,5.2) {\textsf{a}};
		\end{tikzpicture}
	\end{center}
	\end{footnotesize}
	\caption{Three examples of the variable assignments in the $\Av(231,321)$-machine.}
	\label{fig-Av-231-321-vars}
\end{figure}
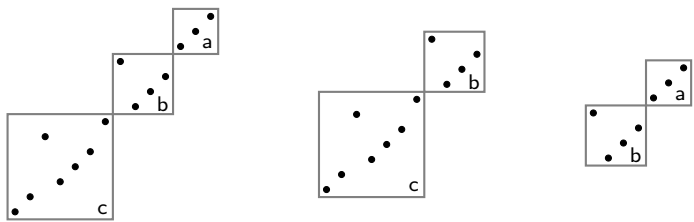

With dynamic programming, we are able to easily compute 1,000 terms in each of first two enumerations, and 600 in the final case, and could extend this enumeration were it of interest. Again we use the method of differential approximation to fit these sequences to asymptotics of the form $C\gamma^n n^{-1-\alpha}$. The data for these three cases is collected in the table below.

\[
	\begin{array}{llccl}
		\text{Class}&\text{Sequence}&\text{$\gamma\approx$}&\text{$\alpha\approx$}&\text{\OEISref}\\\hline
		\Av(4123,4231)&1, 2, 6, 22, 89, 380, 1677, 7566&4.97689&-1&\OEISlink{A165542}\\
		\Av(4123,4312)&1, 2, 6, 22, 89, 382, 1711, 7922&3+2\sqrt{2}&1/2&\OEISlink{A165545}\\
		\Av(4231,4321)&1, 2, 6, 22, 90, 396, 1837, 8864&5.89249&-1&\OEISlink{A053617}\\
		\hline
	\end{array}
\]

As in the previous subsection, we have also used these terms to attempt to conjecture differentiably algebraic generating functions fitting the known data, with no success. This even holds in the second example, despite the consistency of the conjectured values of $\gamma$ and $\alpha$ with simple algebraic generating functions.

\begin{conjecture}
\label{conj-non-ADE-second}
	None of the classes $\Av(4123,4231)$, $\Av(4123,4312)$, or $\Av(4231,4321)$ have differentially algebraic generating functions.
\end{conjecture}

\section{Concluding Remarks}

There are many more permutation classes that could be enumerated --- either obtaining an explicit generating function or generating hundreds or thousands of terms --- with $\C$-machines. While Section~\ref{sec-finite-bounded} initiates the study of the more general theory of how restrictions on a class $\C$ may imply certain properties of the class generated by the $\C$-machine, the four classes considered in Section~\ref{sec-non-D-finite} suggest that extending this classification may require great care.

For instance, we presented the class $\Av(4123,4231,4312)$ generated by the $\Av(123,231,312)$-machine. Recall that the class $\Av(123,231,312)$ is a polynomial class represented by the peg permutation $1^-2^-$ and that we conjecture that the class $\Av(4123,4231,4312)$ does not have a differentially algebraic generating function. One might suspect that the cause of this complicated behavior is the presence of two entries inflated by $+$ or $-$ in the peg permutation representing the class. However, there are (up to symmetry) four other two-cell machines, those represented by the peg permutations $1^+2^-$, $2^+1^+$, $2^+1^-$, and $2^-1^+$. The last three can be shown to generate Wilf-equivalent classes by considering their corresponding generation sequences, and all can be shown to generate classes whose generating functions are algebraic.

It appears that the $\Av(123,231,312)$-machine of Section~\ref{sec-non-D-finite} is harder to model than the other four two-cell machines for the same reason the kernel method fails to apply: when there is a single entry in the $1^-$ cell and at least one entry in the $2^-$ cell, the act of popping the leftmost entry causes all entries in the $2^-$ cell to shift downward into the $1^-$ cell. While we now know that the Noonan--Zeilberger Conjecture is false thanks to the work of Garrabrant and Pak~\cite{Garrabrant:Pattern-avoidan:}, among all potential concrete counterexamples, the class $\Av(4123, 4231, 4312)$ analyzed in Section~\ref{subsec-Av-4123-4231-4312} is simplest yet identified.


\bigskip

\textbf{Acknowledgements:} We are grateful to Mireille Bousquet-M\'elou for suggesting a number of improvements to an earlier version of the paper, and to the referees for their careful reading of this work.

\bibliographystyle{acm}
\bibliography{../../refs}

\def\cprime{$'$}
\begin{thebibliography}{10}

\bibitem{albert:geometric-grid-:}
{\sc Albert, M.~H., Atkinson, M.~D., Bouvel, M., Ru{\v{s}}kuc, N., and Vatter,
  V.}
\newblock Geometric grid classes of permutations.
\newblock {\em Trans. Amer. Math. Soc. 365}, 11 (2013), 5859--5881.

\bibitem{albert:regular-closed-:}
{\sc Albert, M.~H., Atkinson, M.~D., and Ru{\v{s}}kuc, N.}
\newblock Regular closed sets of permutations.
\newblock {\em Theoret. Comput. Sci. 306}, 1-3 (2003), 85--100.

\bibitem{albert:the-insertion-e:}
{\sc Albert, M.~H., Linton, S., and Ru\v{s}kuc, N.}
\newblock The insertion encoding of permutations.
\newblock {\em Electron. J. Combin. 12}, 1 (2005), Paper 47, 31 pp.

\bibitem{arratia:on-the-stanley-:}
{\sc Arratia, R.}
\newblock On the {S}tanley--{W}ilf conjecture for the number of permutations
  avoiding a given pattern.
\newblock {\em Electron. J. Combin. 6\/} (1999), Note 1, 4 pp.

\bibitem{atkinson:sorting-with-tw:}
{\sc Atkinson, M.~D., Murphy, M.~M., and Ru{\v{s}}kuc, N.}
\newblock Sorting with two ordered stacks in series.
\newblock {\em Theoret. Comput. Sci. 289}, 1 (2002), 205--223.

\bibitem{bloom:pattern-avoidan:}
{\sc Bloom, J., and Elizalde, S.}
\newblock Pattern avoidance in matchings and partitions.
\newblock {\em Electron. J. Combin. 20}, 2 (2013), Paper 5, 38 pp.

\bibitem{bloom:two-vignettes-o:}
{\sc Bloom, J., and Vatter, V.}
\newblock Two vignettes on full rook placements.
\newblock {\em Australas. J. Combin. 64}, 1 (2016), 77--87.

\bibitem{bona:exact-enumerati:}
{\sc B{\'o}na, M.}
\newblock Exact enumeration of {$1342$}-avoiding permutations: a close link
  with labeled trees and planar maps.
\newblock {\em J. Combin. Theory Ser. A 80}, 2 (1997), 257--272.

\bibitem{bona:the-permutation:}
{\sc B{\'o}na, M.}
\newblock The permutation classes equinumerous to the smooth class.
\newblock {\em Electron. J. Combin. 5\/} (1998), Paper 31, 12 pp.

\bibitem{bousquet-melou:polynomial-equa:}
{\sc Bousquet-M{\'e}lou, M., and Jehanne, A.}
\newblock Polynomial equations with one catalytic variable, algebraic series
  and map enumeration.
\newblock {\em J. Combin. Theory Ser. B 96}, 5 (2006), 623--672.

\bibitem{chow:forbidden-subse:}
{\sc Chow, T., and West, J.}
\newblock Forbidden subsequences and {C}hebyshev polynomials.
\newblock {\em Discrete Math. 204}, 1-3 (1999), 119--128.

\bibitem{conway:on-the-growth-r:}
{\sc Conway, A.~R., and Guttmann, A.~J.}
\newblock On the growth rate of $1324$-avoiding permutations.
\newblock {\em Adv. in Appl. Math. 64\/} (2015), 50--69.

\bibitem{conway:1324-avoiding-p:}
{\sc Conway, A.~R., Guttmann, A.~J., and Zinn-Justin, P.}
\newblock $1324$-avoiding permutations revisited.
\newblock arXiv:1709.01248 [math.CO].

\bibitem{Doyle:Stackable-and-q:}
{\sc Doyle, P.~G.}
\newblock Stackable and queueable permutations.
\newblock arXiv:1201.6580 [math.CO].

\bibitem{:Handbook-of-wei:}
{\sc Droste, M., Kuich, W., and Vogler, H.}, Eds.
\newblock {\em Handbook of Weighted Automata}.
\newblock Monographs in Theoretical Computer Science. Springer-Verlag, Berlin,
  Germany, 2009.

\bibitem{elder:problems-and-co:}
{\sc Elder, M., and Vatter, V.}
\newblock Problems and conjectures presented at the {T}hird {I}nternational
  {C}onference on {P}ermutation {P}atterns, {U}niversity of {F}lorida, {M}arch
  7--11, 2005.
\newblock arXiv:math/0505504 [math.CO].

\bibitem{Garrabrant:Pattern-avoidan:}
{\sc Garrabrant, S., and Pak, I.}
\newblock Pattern avoidance is not {P}-recursive.
\newblock arXiv:1505.06508 [math.CO].

\bibitem{guttmann:on-a-new-method:}
{\sc Guttmann, A.~J., and Joyce, G.~S.}
\newblock On a new method of series analysis in lattice statistics.
\newblock {\em Journal of Physics A: General Physics 5}, 9 (1972), L81.

\bibitem{homberger:on-the-effectiv:}
{\sc Homberger, C., and Vatter, V.}
\newblock On the effective and automatic enumeration of polynomial permutation
  classes.
\newblock {\em J. Symbolic Comput. 76\/} (2016), 84--96.

\bibitem{huczynska:grid-classes-an:}
{\sc Huczynska, S., and Vatter, V.}
\newblock Grid classes and the {F}ibonacci dichotomy for restricted
  permutations.
\newblock {\em Electron. J. Combin. 13\/} (2006), Paper 54, 14 pp.

\bibitem{johansson:using-functiona:}
{\sc Johansson, F., and Nakamura, B.}
\newblock Using functional equations to enumerate 1324-avoiding permutations.
\newblock {\em Adv. in Appl. Math. 56\/} (2014), 20--34.

\bibitem{kaiser:on-growth-rates:}
{\sc Kaiser, T., and Klazar, M.}
\newblock On growth rates of closed permutation classes.
\newblock {\em Electron. J. Combin. 9}, 2 (2003), Paper 10, 20 pp.

\bibitem{knuth:the-art-of-comp:1}
{\sc Knuth, D.~E.}
\newblock {\em The Art of Computer Programming}, vol.~1.
\newblock Addison-Wesley, Reading, Massachusetts, 1968.

\bibitem{kremer:permutations-wi:}
{\sc Kremer, D.}
\newblock Permutations with forbidden subsequences and a generalized
  {S}chr\"oder number.
\newblock {\em Discrete Math. 218}, 1-3 (2000), 121--130.

\bibitem{kremer:postscript:-per:}
{\sc Kremer, D.}
\newblock Postscript: ``{P}ermutations with forbidden subsequences and a
  generalized {S}chr\"oder number''.
\newblock {\em Discrete Math. 270}, 1-3 (2003), 333--334.

\bibitem{kremer:finite-transiti:}
{\sc Kremer, D., and Shiu, W.~C.}
\newblock Finite transition matrices for permutations avoiding pairs of length
  four patterns.
\newblock {\em Discrete Math. 268}, 1-3 (2003), 171--183.

\bibitem{le:wilf-classes-of:}
{\sc Le, I.}
\newblock Wilf classes of pairs of permutations of length $4$.
\newblock {\em Electron. J. Combin. 12\/} (2005), Paper 25, 27 pp.

\bibitem{noonan:the-enumeration:}
{\sc Noonan, J., and Zeilberger, D.}
\newblock The enumeration of permutations with a prescribed number of
  ``forbidden'' patterns.
\newblock {\em Adv. in Appl. Math. 17}, 4 (1996), 381--407.

\bibitem{pantone:structural-anal:}
{\sc Pantone, J.}
\newblock {\em Structural Analysis of Permutation Classes}.
\newblock PhD thesis, University of Florida, 2015.

\bibitem{pratt:computing-permu:}
{\sc Pratt, V.~R.}
\newblock Computing permutations with double-ended queues, parallel stacks and
  parallel queues.
\newblock In {\em STOC '73: Proceedings of the Fifth Annual ACM Symposium on
  the Theory of Computing}. ACM, New York, New York, 1973, pp.~268--277.

\bibitem{sloane:the-on-line-enc:}
{\sc {The {O}n-line {E}ncyclopedia of {I}nteger {S}equences ({OEIS)}}}.
\newblock {P}ublished electronically at \texttt{http://oeis.org/}.

\bibitem{vatter:finitely-labele:}
{\sc Vatter, V.}
\newblock Finitely labeled generating trees and restricted permutations.
\newblock {\em J. Symbolic Comput. 41}, 5 (2006), 559--572.

\bibitem{vatter:problems-and-co:}
{\sc Vatter, V.}
\newblock Problems and conjectures presented at the problem session.
\newblock In {\em Permutation Patterns}, S.~Linton, N.~Ru{\v{s}}kuc, and
  V.~Vatter, Eds., vol.~376 of {\em London Mathematical Society Lecture Note
  Series}. Cambridge University Press, Cambridge, England, 2010, pp.~339--344.

\bibitem{vatter:small-permutati:}
{\sc Vatter, V.}
\newblock Small permutation classes.
\newblock {\em Proc. Lond. Math. Soc. (3) 103}, 5 (2011), 879--921.

\bibitem{vatter:permutation-cla:}
{\sc Vatter, V.}
\newblock Permutation classes.
\newblock In {\em Handbook of Enumerative Combinatorics}, M.~B{\'o}na, Ed. CRC
  Press, Boca Raton, Florida, 2015, pp.~754--833.

\bibitem{wilf:what-is-an-answ:}
{\sc Wilf, H.~S.}
\newblock What is an answer?
\newblock {\em Amer. Math. Monthly 89}, 5 (1982), 289--292.

\bibitem{Zeilberger:Enumerative-and:}
{\sc Zeilberger, D.}
\newblock Enumerative and algebraic combinatorics.
\newblock In {\em Princeton Companion to Mathematics}, T.~Gowers, Ed. Princeton
  University Press, Princeton, New Jersey, 2008, pp.~550--561.

\end{thebibliography}

\end{document}